\providecommand{\customgenericname}{}
\newcommand{\newcustomtheorem}[2]{%
  \newenvironment{#1}[1]
  {%
   \renewcommand\customgenericname{#2}%
   \renewcommand\theinnercustomgeneric{##1}%
   \innercustomgeneric
  }
  {\endinnercustomgeneric}
}
\newtheorem{theorem}{Theorem}[section]
\newtheorem{lemma}[theorem]{Lemma}
\newtheorem{corollary}[theorem]{Corollary}
\theoremstyle{definition}
\theoremstyle{remark}
\def\R{\mathbb{R}}
\def\calO{\mathcal{O}}
\def\calP{\mathcal{P}}
\def\calV{\mathcal{V}}
\title{
    Universality of Polyhedral Linkages
}
\author{
    Robert Miranda
}
\begin{document}

\begin{abstract}
    Planar linkages are a rich area of study motivated by practical applications in engineering mechanisms. A central result is Kempe's Universality Theorem, which states that semi-algebraic sets can be realized by planar linkages. Polyhedral linkages are generalizations of planar linkages to higher dimensions, where the faces are required to be rigid. In this paper, we generalize Kempe's Universality Theorem to polyhedral linkages with an embedded construction in dimension three and above.
\end{abstract}

\maketitle{}

\section{Introduction}
\label{section:introduction}

\subsection{Setup}

A planar linkage is an edge weighted graph that is realized in $\R^2$, such that the weight of each edge corresponds to the distance between the two adjacent vertices. One may also fix some vertices to be realized at a specific point in $\R^2$, thus constraining the possible realizations of the other vertices. Planar linkages can be used to `compute' functions in the following sense. Suppose $U \subset \R^2$ is an open set and $F : U \to \R^2$ is a function. We say that a planar linkage \textit{defines} $F$ on the neighborhood $U$ if the linkage has an input vertex $x$ and an output vertex $y$, such that whenever $x$ is realized in $U$, then $y$ is constrained to be realized at $F(x)$. (See e.g. \cite{DO'R}, \cite{Pak}.)

\smallskip

Famously, planar linkages are universal in the sense that they can realize any polynomial function in any bounded region. This result is known as Kempe's Universality Theorem \cite{Kem}. (See a generalization in modern terminology in \cite{KM}.) Alternatively, we may fix the output vertex to a specific point, which constrains the input vertex to trace the zero locus of the defined function. Such linkages are called \textit{closed}. In this setting, planar linkages can trace any bounded region of an algebraic set, or as popularized by Thurston, they can ``sign your name''. (See ~\cite{King99} for a historical account.)

\smallskip

We say that a planar linkage is \textit{embedded} if its edges do not cross and its vertices are distinct. Kempe's original proof did not consider embedding, and critically relied on linkages which are not planar graphs. Nevertheless, Kempe's Universality Theorem was proved with an embedded construction in \cite{AB+}, settling a question of Shimamoto from 2004.

\subsection{Main Results}

In this paper, we prove a version of Kempe's Universality Theorem that holds for embedded linkages in dimension three and generalizes to higher dimensions. We consider \textit{polyhedral linkages}, which replace the weighted graphs used in planar linkages with polyhedral complexes of codimension $1$ satisfying certain properties. The term polyhedral linkage was first coined by Goldberg in \cite{Gol}. In three dimensions, polyhedral linkages are closely related to notions of \textit{rigid origami}, but in this case our surface may have any topology. (See \cite{DO'R}, \cite{GZ}.) Our definition is modeled after the definition of planar linkages given by Kapovich and Millson in \cite{KM}. (See \S~\ref{section:polyhedral-linkages}.) We note versions of Kempe's Universality Theorem have previously been proven for weighted graphs embedded in $\R^n$ by \cite{King98} and \cite{Abb}.

\begin{theorem}
    \label{thm:linkage-universal}
    Let $U \subset \R^3$ be a bounded, open set, and $F : U \to \R^n$ be a polynomial function. There exists an embedded, functional polyhedral linkage $\calP$ which defines $F$ on $U$. Moreover, this construction generalizes to all higher dimensions.
\end{theorem}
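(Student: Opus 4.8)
\emph{Strategy of proof.} The plan is to build $\calP$ in two stages: first produce an embedded \emph{bar} linkage in $\R^3$ that computes $F$, and then ``panelize'' it into an embedded polyhedral linkage of codimension one with the same input-output behavior.

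For the first stage I would adapt the classical Kempe gadget library, now realized in $\R^3$ rather than in the plane, using the embedded planar construction of \cite{AB+} and the higher-dimensional treatments of \cite{King98, Abb} as templates. The building blocks are: a subcomplex that reads off the three coordinates of the free input vertex relative to a fixed frame (the polyhedral analogue of Kempe's coordinate-extraction gadgets, built here from a rigid frame with prismatic or planar joints); translators, obtained from rigidified parallelograms; adders; and multipliers, each assembled from a squaring gadget (via similar triangles, or an inversor) together with the identity $4ab = (a+b)^2 - (a-b)^2$. Since every component of $F$ is obtained from the three coordinate inputs and constants by finitely many additions and multiplications, wiring these gadgets together accordingly --- and then eliminating the parasitic branches of the configuration space by Kempe's bracing trick, equivalently by the rigidified components of \cite{KM} --- yields a bar linkage $L$ whose output vertices are forced to the coordinates of $F(\text{input})$ whenever the input lies in $U$; a final cluster of adders and translators then assembles these into a single output vertex at $F(\text{input}) \in \R^n$. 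Crucially, working in $\R^3$ rather than $\R^2$ makes it routine to keep $L$ embedded throughout its entire motion, since the extra dimension leaves room to route non-incident bars past one another, so the delicate planar arguments of \cite{AB+} are not needed. One also arranges that all vertices of $L$ remain in a fixed bounded box, so that the configuration space of $L$ over $\overline{U}$ is compact.

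For the second stage I would convert $L$ into a polyhedral linkage by a purely local substitution: replace each bar $uv$ of $L$ by a thin rigid triangle $T_{uv}$ having $u$ and $v$ as two of its vertices and a fresh apex $w_{uv}$ --- an isosceles triangle of small ``thickness'' $\epsilon$, with $w_{uv}$ placed initially at distance $\epsilon$ from the segment $uv$ along a generically chosen direction --- and let two such triangles meet exactly where the corresponding bars of $L$ met, that is, along their common vertices only. The result $\calP$ is a two-dimensional polyhedral complex in $\R^3$, hence of codimension one, each of whose faces is a rigid polygon, so $\calP$ is a polyhedral linkage in the sense of \S\ref{section:polyhedral-linkages}. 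Each triangle $T_{uv}$ simply carries $u$ and $v$ rigidly at their prescribed distance, the apex $w_{uv}$ contributing only an inert rotational degree of freedom about the line $uv$; hence forgetting the apexes gives a surjection from the realizations of $\calP$ onto the realizations of $L$ with connected fibers. In particular the position of the output vertex as a function of the input vertex is unchanged, so $\calP$ still defines $F$ on $U$.

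It remains to arrange that $\calP$ is embedded throughout its motion. Adjoining to the compact configuration space of $L$ over $\overline{U}$ the circular phantom freedom of each apex keeps it compact, so the closed set of self-crossing configurations of $L$ is disjoint from it and hence at positive distance; taking $\epsilon$ small enough then keeps the thin triangles of $\calP$ crossing-free throughout the motion, while a generic choice of the apex directions keeps any two faces disjoint except along the cells they share. I expect the genuine difficulty of the paper to lie precisely here, in making the honestly three-dimensional pieces --- above all the coordinate-reading frame and any other prismatic or planar joints --- both \emph{faithful} and \emph{embedded} as rigid panel-and-hinge subcomplexes: one must realize such a joint out of rigid panels, for instance as a short accordion of panels hinged about parallel axes, without introducing either spurious out-of-plane motions or extra connected components of the configuration space that would violate ``$\calP$ defines $F$ on $U$'', and the compactness of the configuration space over $\overline{U}$ is the main tool for controlling both. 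Finally, the whole argument generalizes to $U \subset \R^d$ and $F : U \to \R^n$ by working in $\R^N$ with $N = \max(d,n)$, replacing each bar by a thin $(N-1)$-simplex so that $\calP$ remains of codimension one, and the coordinate-reading frame by its $N$-dimensional analogue; everything else goes through verbatim.
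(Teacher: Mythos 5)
There is a genuine gap, and it sits exactly where you placed the ``purely local substitution'': the panelization step does not produce a polyhedral linkage in the sense of \S\ref{section:polyhedral-linkages}. The definition requires a \emph{pure, proper} codimension-$1$ complex, where properness forces any two maximal $k$-faces to intersect in $\varnothing$ or a $(k-1)$-face; consequently the only joints available are hinges along shared codimension-$1$ faces. Your thin triangles $T_{uv}$ and $T_{vx}$ meet in a single vertex, which is a $0$-face, so the complex is not proper and the ``joint'' you are using is a spherical vertex joint, which the model simply does not provide. If you repair this by gluing panels along shared edges, the joint becomes a hinge about that edge and the kinematics of the bar linkage $L$ is destroyed; if you instead relax the definition to allow vertex contacts, the theorem collapses to the known bar-linkage universality results in $\R^n$ (King, Abbott), which the paper explicitly cites as prior work and is deliberately not re-proving. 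So the reduction ``embedded bar linkage, then thicken bars'' cannot work as stated: the whole difficulty of the theorem is to emulate the needed degrees of freedom using only rigid faces hinged along codimension-$1$ faces, and your proposal defers precisely that point (you acknowledge it for ``prismatic or planar joints'' but treat it as a technicality rather than the main construction). Note also that the observation you lean on implicitly cuts against you: extruding or panelizing adds \emph{no} flexibility, which is why panel structures are strictly more constrained than bar structures.

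By contrast, the paper never passes through a bar linkage. It builds, from scratch, an embedded periodic modification of Sarrus' linkage (the \emph{extender}, Theorem~\ref{thm:extender}) whose one face translates linearly relative to another; it stores scalars as lengths of extenders arranged in registers; it realizes each elementary operation (scalar addition, negation via $\left(\frac{\pi}{4},\frac{\pi}{4},\frac{\pi}{2}\right)$ triangle linkages, scalar multiplication and half-addition via extruded rigidified pantographs, inversion via an extruded Peaucellier cell) as an embedded panel-and-hinge gadget connecting an input register to a vertically offset output register, with explicit domain bookkeeping in terms of $N$; and it converts $3$-dimensional motion of the input vertex into a register using the $1$-skeleton of a cube built from extenders, reversing the process at the output. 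Embeddedness is obtained by vertical offsets, rerouting, and spacing rather than by a genericity-plus-compactness perturbation of a bar linkage. If you want to salvage your two-stage plan, you would need a gadget, built only from rigid faces and codimension-$1$ hinges, that emulates a spherical joint with prescribed bar lengths and remains embedded and free of parasitic components; constructing such gadgets is essentially equivalent to what the paper does directly.
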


We also generalize the corollary of Kempe's Universality Theorem for closed linkages. That is, to use Thurston's phrasing, there exist embedded polyhedral linkages which can ``sign'' any algebraic set in $\R^n$.

\begin{corollary}
    \label{thm:linkage-universal-closed}
    Let $U \subset \R^3$ be a bounded, open set, and $S \subset \R^3$ be an algebraic set. Then there is an embedded polyhedral linkage which realizes $S \cap U$. Moreover, this construction generalizes to all higher dimensions.
\end{corollary}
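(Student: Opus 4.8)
The plan is to derive Corollary~\ref{thm:linkage-universal-closed} from Theorem~\ref{thm:linkage-universal} by the classical reduction from algebraic sets to a single polynomial, followed by ``closing'' the resulting functional linkage. First I would write $S$ as the common zero set of finitely many polynomials $f_1,\dots,f_k\in\R[x_1,x_2,x_3]$ and pass to the single polynomial $f=f_1^2+\cdots+f_k^2$. Since we work over $\R$, a point lies in $S$ if and only if it is a zero of $f$, so $S\cap U=\{x\in U:f(x)=0\}$. Note this reduction is insensitive to the dimension of the ambient space, which is what will make the last sentence of the corollary automatic.

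Next, apply Theorem~\ref{thm:linkage-universal} to the polynomial function $f\colon U\to\R$. This yields an embedded, functional polyhedral linkage $\calP$ with an input vertex $x$ and an output vertex $y$ such that in every realization with $x$ placed in $U$ the vertex $y$ is forced to the position encoding the value $f(x)$, and---by functionality---every point of $U$ is attained by the input vertex in some realization. Now form $\calP'$ by additionally pinning $y$ to the position encoding $0\in\R$. Pinning a vertex introduces no new cells, so $\calP'$ is still an embedded polyhedral linkage.

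It then remains to identify the set traced by the input vertex of $\calP'$. In any realization of $\calP'$ with $x$ in $U$ we have $f(x)=0$, hence $x\in S\cap U$; conversely, for any $x_0\in S\cap U$, functionality of $\calP$ supplies a realization with input $x_0$, and there the output vertex sits at the position encoding $f(x_0)=0$, so that realization is legal for $\calP'$. Thus the input vertex of $\calP'$ realizes exactly $S\cap U$; if one additionally wants to exclude realizations whose input leaves $U$, one invokes whatever device the proof of Theorem~\ref{thm:linkage-universal} already uses to confine the input to $U$. Finally, since Theorem~\ref{thm:linkage-universal} is asserted for all dimensions $\geq 3$ and nothing above uses anything beyond the sum-of-squares identity, running the same argument for an algebraic set $S\subset\R^d$ produces an embedded polyhedral linkage realizing $S\cap U$ in $\R^d$.

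The mathematical content is carried entirely by Theorem~\ref{thm:linkage-universal}; the only things to watch are that pinning the output vertex preserves embeddedness (immediate, since no cells are added) and that it is the \emph{functionality} of $\calP$, not merely the defining relation ``$y=f(x)$ when $x\in U$'', that supplies the inclusion $S\cap U\subseteq\{\text{positions of the input vertex}\}$. Accordingly, I expect the only genuinely delicate point to be the bookkeeping of realizations whose input vertex lies outside $U$---i.e.\ the gap between the global configuration space of the input vertex and its trace over $U$---and this should be a non-issue given the confinement already present in the construction behind Theorem~\ref{thm:linkage-universal}.
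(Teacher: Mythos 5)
Your proposal is correct and follows the same overall strategy as the paper: invoke Theorem~\ref{thm:linkage-universal} to get an embedded functional linkage and then ``close'' it by making the output vertices fixed vertices, so that the input vertex is constrained to trace $S \cap U$ (with the trace over $U$ handled by the branched-cover condition on $p$, exactly as you anticipate). The one genuine difference is the reduction from the algebraic set to polynomials. You collapse the defining polynomials $f_1,\dotsc,f_k$ into the single polynomial $f = f_1^2 + \cdots + f_k^2$ and pin one output vertex at the point encoding $0$; the paper instead keeps all the $f_i$, applies the vector version of the theorem to $F : (x,y,z) \mapsto \big((f_1(x,y,z),0,0),\dotsc,(f_k(x,y,z),0,0)\big)$, and pins every output vertex at its relative origin. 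Your sum-of-squares route needs only a single output vertex and the scalar form of the theorem, which is slightly more economical in hypotheses; the paper's route avoids doubling the degrees of the polynomials (relevant only to the size of $N$ and the bulk of the construction, not to correctness) and directly exploits the multi-output formalism with separate reference frames that the paper has already set up for embedded linkages. Both are valid, and your attention to the two points you flag---that pinning the output is exactly the paper's notion of a closed linkage, and that functionality (surjectivity of $p$ onto $U$) is what gives the inclusion $S \cap U \subseteq p(C)$---matches the logic the paper relies on.
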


\subsection*{Remark: (Embedded linkages with multiple inputs and outputs)}
Kempe's Universality Theorem extends to a stronger result that for constants $m_1, m_2 \geq 1$, each polynomial function $F : (\R^2)^{m_1} \to (\R^2)^{m_2}$ can be realized by planar linkages on an open bounded set. (See \cite{KM}.) However, allowing multiple inputs and outputs poses a separate problem for embedded linkages. For example, consider a function with two inputs. If this were simulated by a physical machine, then the two input vertices may be realized at the same point. This would mean that the physical mechanisms should be able to `pass through each other'. A similar problem can occur with output vertices.

\smallskip

To address this problem, we propose a modification to the definition of a functional linkage which considers each input and output vertex relative to its own reference coordinate frame. Specifically, in the forgetful maps defining the input and output vertices, we introduce a postcomposition which translates each coordinate. (See \S\ref{section:polyhedral-linkages}, and \cite{KM} for comparison.) This allows us to extend the vector version of Kempe's Universality Theorem to three dimensions and above with embedded linkages.

\begin{figure}
\begin{center}
    \includegraphics[scale = 0.5]{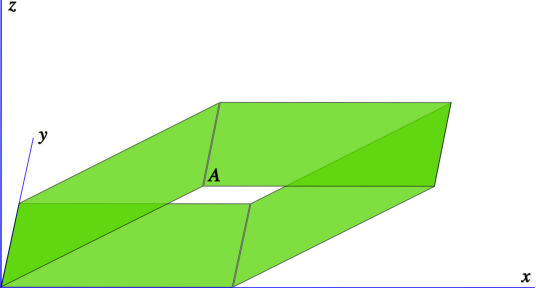}
    \caption{A polyhedral linkage.}
    \label{fig:polyhedral-linkage}
\end{center}
\end{figure}

\subsection{Outline of the construction}
Our construction of functional polyhedral linkages centers around an efficient way to generate linear motion in three dimensions. In $\R^2$, finding a linkage which produces linear motion was a fundamental problem in the field. (For a history of the solution, see \cite[\S 14]{KM}.) However, the construction in $\R^3$ is comparatively simpler. In fact, in our current terminology a polyhedral linkage which achieves linear motion was first discovered by Sarrus in 1853, several years before any solution in $\R^2$ has been published. (See \cite{Sar}, \cite{Gol} and also \cite{WKA}.) Our construction is based on Sarrus' linkage with modifications to create a periodic flexing. Specifically, for any $0 < a < b$, we produce an embedded polyhedral linkage whose length in one dimension varies continuously in the interval $(a, b)$, and which is arbitrarily small in all other dimensions. (See \S~\ref{section:linear-motion}.)

\smallskip

We set up an array of these \textit{extender} linkages arranged parallel to each other in a fixed plane. The length of each extender encodes a scalar value, and the register of extenders allows us to encode a vector value. Note that this differs fundamentally from registers used in digital computers because each extender stores a value with infinite precision, but the range of values an extender can hold is still limited by its construction, i.e., by the chosen values of $a$ and $b$. (See \S~\ref{section:scalar-computation}.)

\smallskip

Computations are decomposed into a sequence of elementary operations. Each elementary operation uses an input register and an output register, which is a parallel translate of the input register. The output of the operation is stored in an extender in the output register by attaching both the output extender, and all input extenders, to a specific elementary linkage. (See \S~\ref{section:scalar-computation}) These are generalizations of elementary planar linkages to higher dimensions, sometimes with slight modifications so that they are embedded. (See \S \ref{section:polyhedral-linkages} and \cite[\S 6]{KM} for comparison.) All other values in the original register are faithfully copied to the output register by rigid linkages.

\subsection{Structure of the paper}
We prove our main results, Theorem~\ref{thm:linkage-universal} and Corollary~\ref{thm:linkage-universal-closed}, by showing that all polynomial functions can be decomposed into elementary operations which can be performed by polyhedral linkages. Then we construct a polyhedral linkage in $\R^3$ which has a point that can be moved freely in a $3$-dimensional region, and which records the coordinates of this point in a register of $3$ extenders. (See \S~\ref{section:vector-computation}.) We use this register as the beginning of our computation, and attach a similar linkage to combine the final output register into the coordinates of a single output vertex. The generalization to multiple inputs and outputs is simple with our convention of separate reference frames. (See \S~\ref{section:polyhedral-linkages}.) The general case in higher dimensions is then immediate by extruding our construction. (See \S~\ref{sub-section:higher-dim}.)

\section{Polyhedral linkages}
\label{section:polyhedral-linkages}

Here we give a general definition of a polyhedral linkage in dimensions two or higher. All of the constructions take place in two or three dimensions, and generalizations to higher dimensions are immediate. Our definition is a direct generalization of the definition of a planar linkage to include a codimension $1$ polyhedral complex realized in $\R^{n}$. See \cite{KM} for motivation and a full definition for planar linkages.

\subsection{Polyhedral linkages}
\label{sub-section:polyhedral-linkages}
Let $\calP$ be a polyhedral complex in $\R^n$. We say that $\calP$ is \textit{pure} if every maximal polytope has the same dimension, and we say that $\calP$ is \textit{proper} if the intersection of any two $k$-dimensional faces in $\calP$ is either $\varnothing$, or a $(k-1)$-dimensional face in $\calP$. An edge weighted graph is exactly a $1$-dimensional pure, proper polyhedral complex in $\R^2$.

\smallskip

A \textit{polyhedral linkage} of dimension $n$ is a pair $(\calP, W)$, where $\calP$ is a $n$-dimensional pure, proper polyhedral complex in $\R^{n+1}$, and $W \subset \calV(\calP)$ is a subset of the vertices of $\calP$ called the \textit{fixed vertices}. When clear by context, we will refer to the polyhedral linkage as $\calP$ without reference to $W$.

\smallskip

A \textit{realization} of $\calP$ is a map $\phi : \calV(\calP) \to \R^{n+1}$ such that for each maximal face $F \in \calP$, the vertices $\{\phi(v) \ | \ v \in F \cap \calV(\calP)\}$ form the vertices of a polytope congruent to $F$. I.e., the maximal polytopes can be rearranged by individual rigid motions as long as they all fit together in the same structure. The $(n-1)$-dimensional faces act as ``joints'', allowing the structure to flex or hinge. The set of all realization, $C(\calP)$, is called the \textit{configuration space} of $\calP$.

\smallskip

Given a set of points $Z$ in $\R^{n+1}$ in bijection with $W$, a realization $\phi : \calV(\calP) \to \R^{n+1}$ is said to be \textit{relative to} $Z$ if $\phi(w) = z$ for each $w \in W$ and the corresponding $z \in Z$. The set of all relative realizations to $Z$, $C(\calP, Z)$, is called the \textit{relative configuration space}.

\smallskip

We say that a realization $\phi \in C(\calP, Z)$ is \textit{embedded} if the interiors of all maximal faces are pairwise disjoint. The set of all embedded realizations, $C^e(\calP, Z)$, forms an open subset of $C(\calP, Z)$. 

For example, consider the $2$-dimensional polyhedral linkage $\calP$ formed by removing two opposite faces of a cube. Let $W$ be the vertices of one face of the cube, and let $Z$ be the four points $\{(0,0,0), (1,0,0), \newline (1,1,0), (0,1,0)\}$ in the $xy$-plane. (See Figure \ref{fig:polyhedral-linkage}.) The relative configuration space $C(\calP, Z)$ consists of three intersecting smooth curves and is naturally identified with the moduli space of the square, \cite[~\S~3]{KM}. However, the embedded realization space $C^e(\calP, Z)$ is identified with the set $\{x^2 + y^2 =~1 ; z~\neq~0\}$, where the location of the vertex $A$ determines the entire configuration when $A$ is not on the $xy$-plane. The points where $A$ is on the $xy$-plane correspond to self-intersecting realizations of $\calP$, and are included in the other two curves in $C(\calP, Z)$. Any physical model which is built to emulate the behavior of $\calP$ can naturally move within a single connected component of $C^e(\calP, Z)$.

\subsection{Functional linkages}
Next we define \textit{functional linkages}. Let $m_1, m_2 \geq 1$ and let 

\[
    F : \R^{(n+1)m_1} \longrightarrow \R^{(n+1)m_2}
\] 
be a given function. A functional linkage is a polyhedral linkage $\calP$ of dimension $n$ with two distinguished sets of vertices $\{P_1, \dotsc, P_{m_1}\}$ called \textit{input vertices} and $\{Q_1, \dotsc, Q_{m_2}\}$ called \textit{output vertices}. We also define two forgetful maps $p : C(\calP, Z) \to \R^{(n+1)m_1}$ and $q : C(\calP, Z) \to \R^{(n+1)m_2}$ as
\[
    p(\phi) = (\phi(P_1) + X_1, \dotsc, \phi(P_{m_1}) + X_{m_1}),
\]
\[
    q(\phi) = (\phi(Q_1) + Y_1, \dotsc, \phi(Q_{m_2}) + Y_{m_2}),
\]
for some choice of translations $X_1, \dotsc, X_{m_1}, Y_1, \dotsc, Y_{m_2} \in \R^{n+1}$. We say that $\calP$ \textit{defines} the function $F$ at a point $\calO \in \R^{(n+1)m_1}$ if there is a commutative diagram
\[\begin{tikzcd}
    & {C(\calP, Z)} \\
    {\R^{(n+1)m_1}} && {\R^{(n+1)m_2}}
    \arrow["p"', from=1-2, to=2-1]
    \arrow["q", from=1-2, to=2-3]
    \arrow["F", from=2-1, to=2-3]
\end{tikzcd}\]
and $p$ is a regular topological branched cover of a bounded open set $U \subset \R^{(n+1)m_1}$ containing the point $\calO$. Alternatively, we say that $\calP$ is a \textit{functional linkage} for the germ $(F, \calO)$. Moreover, we say that a functional linkage $\calP$ is \textit{embedded} if there exists a connected component $E \subset C^e(\calP, Z)$ such that the restrictions $p|_E, q|_E$ still form a commutative diagram. 

\[\begin{tikzcd}
    & E \\
    {\R^{(n+1)m_1}} && {\R^{(n+1)m_2}}
    \arrow["{p|_E}"', from=1-2, to=2-1]
    \arrow["{q|_E}", from=1-2, to=2-3]
    \arrow["F", from=2-1, to=2-3]
\end{tikzcd}\]

Finally, we say that a functional linkage $\calP$ is \textit{closed} if the output vertices are also fixed vertices. In this case, the image of the input map corresponds to the zero set of the defined function, and we say the $\calP$ \textit{realizes} $U$, where $U := p(C(\calP, Z))$, or $U := p|_E(E)$ in the embedded case.

\section{Linear motion in three dimensions}
\label{section:linear-motion}

In this section we give a construction which is a modification of Sarrus' linkage for linear motion. It has the added property that the linkage is periodic, and remains periodic during flexion. This is important for scalability in the construction of embedded polyhedral linkages.

\subsection{Basic construction}
Consider the following planar linkage. The vertices $A$ and $B$ are fixed at $(0,0)$ and $(1,0)$, respectively. All other vertices are allowed to move freely, and all edges have length $1$. See Figure~\ref{fig:planar-linkage}.

\smallskip

\begin{figure}
\begin{center}
    \includegraphics[scale = 1]{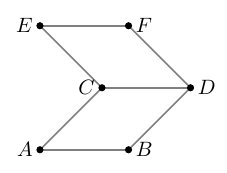}
    \caption{A planar linkage which achieves a $2$-dimensional range of motion.}
    \label{fig:planar-linkage}
\end{center}
\end{figure}

The vertex $E$ may move within the $2$-dimensional region $\{x^2 + y^2 \leq 4\}$. However, when this planar linkage is extruded\footnote{To \textit{extrude} means to press or push out. In this context, we use the word extrude to mean forming a new polyhedral complex by taking the product with the unit interval. E.g., a line segment is extruded to a square, which is extruded to a cube. Or the planar linkage in Figure \ref{fig:planar-linkage} is extruded to the polyhedral linkage in Figure \ref{fig:extrude}.} to a polyhedral linkage in $3$-dimensions, the domain of the corresponding vertex remains $2$-dimensional. It does not gain an extra dimension of flexibility.

\smallskip

This extruded linkage is shown in Figure \ref{fig:extrude}. The linkage is marked by fixing the vertices $A, B$ and $A'$ to be at $(0,0,0), (1,0,0)$ and $(0,0,1)$, respectively. All other vertices are allowed to move freely, and all edges have length $1$.

\smallskip

\begin{figure}
\begin{center}
    \includegraphics[scale = 0.5]{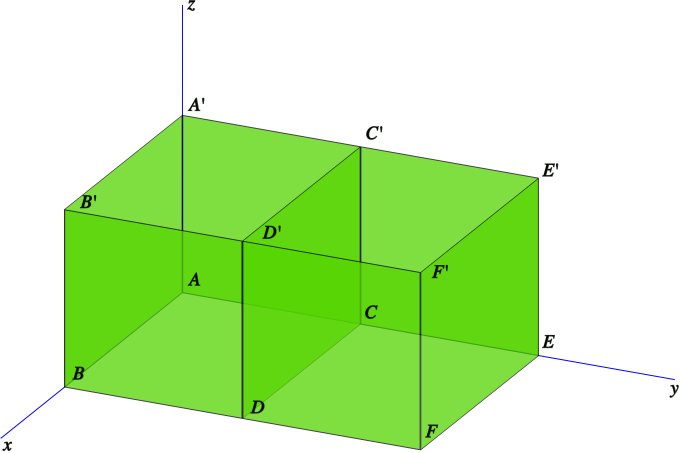} \quad
    \includegraphics[scale = 0.5]{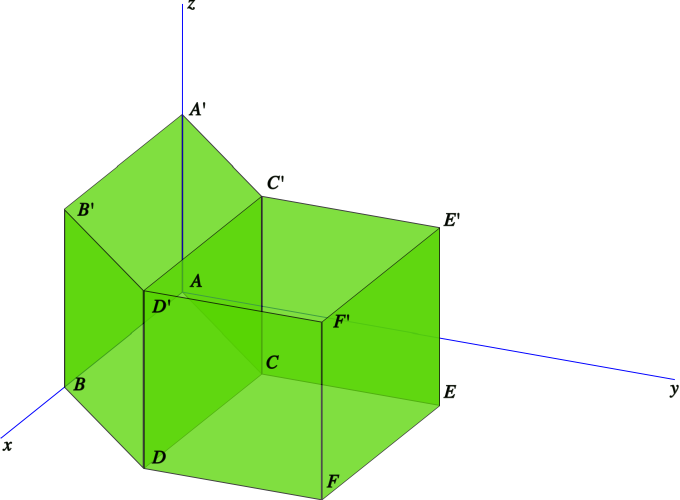}
    \caption{A polyhedral linkage which achieves a $2$-dimensional range of motion.}
    \label{fig:extrude}
\end{center}
\end{figure}

The vertex $E$ may move within the region $\{x^2 + y^2 \leq 4 ; z = 0\}$, which is still $2$-dimensional. We constrain this linkage to restrict the motion to be $1$-dimensional. We add two vertices, $X$ and $Y$, which form two squares $ABYX$ and $EFYX$. See Figure~\ref{fig:roofs}.

\smallskip

\begin{figure}
\begin{center}
    \includegraphics[scale = 0.5]{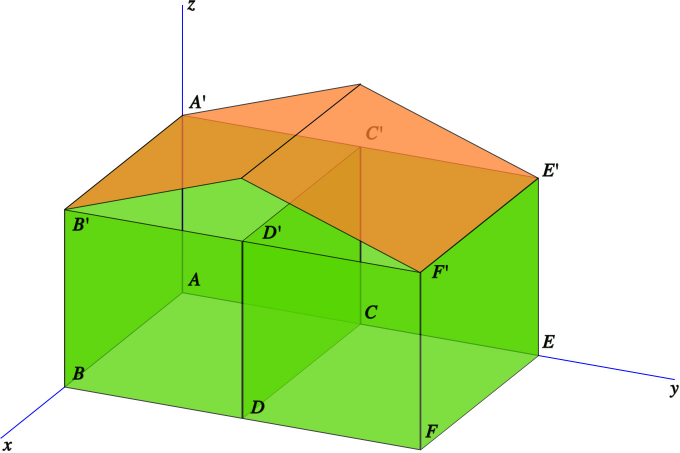} \quad
    \includegraphics[scale = 0.5]{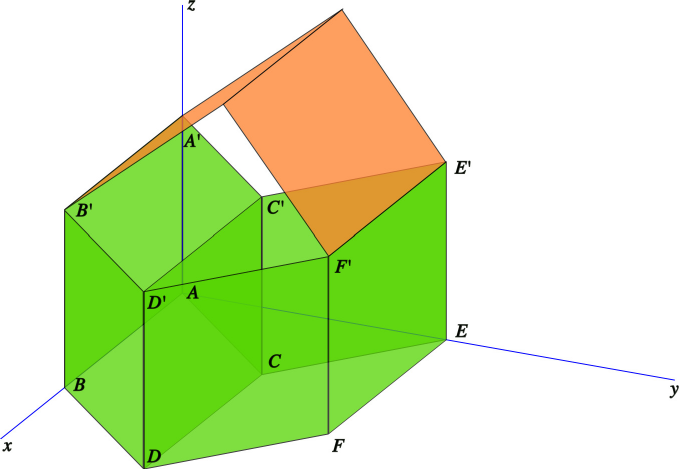}
    \caption{A polyhedral linkage which achieves a $1$-dimensional range of motion.}
    \label{fig:roofs}
\end{center}
\end{figure}

The lengths of all the new edges are set to $1$. The squares $ABYX$ and $EFYX$ constrain the vertices $E$ and $A$ to have the same $x$-coordinate. Therefore, the vertex $E$ may move in the $1$-dimensional region $\{0 \leq y \leq 2 ; x = 0 ; z = 0\}$. This is a notably simpler solution to linear motion in $3$-dimensions than in $2$-dimensions. Note that Sarrus' original linkage is obtained by removing the vertices $D$ and $D'$, and all edges and faces adjacent to them. (See \cite{Sar}.)

\subsection{Periodic construction}
We extend this construction periodically as follows. Note that in any realization of the previous linkage, the squares $EFF'E'$ and $ABB'A'$ are parallel translates of each other by a scalar multiple of the normal vector to each face. We attach multiple copies of the linkage together by identifying the $EFF'E'$ square on the $i$-th copy with the $ABB'A'$ square of the $(i+1)$-st copy.

\smallskip

With $n$ copies attached, the $E$-vertex of the $n$-th copy can be moved within the region\newline$\{0 \leq x \leq 2n ; y = 0 ; z = 0\}$. Moreover, note that if we also add `roofs' connecting the $CD$ edge of the $i$-th copy to the $CD$ edge of the $(i+1)$-st copy, then the $y$-coordinates of the $C$ and $D$ vertices of all constituent linkages will be the same, so they all will flex at the same rate. See Figure~\ref{fig:periodic}.

\smallskip

\begin{figure}
\begin{center}
    \includegraphics[scale = 0.5]{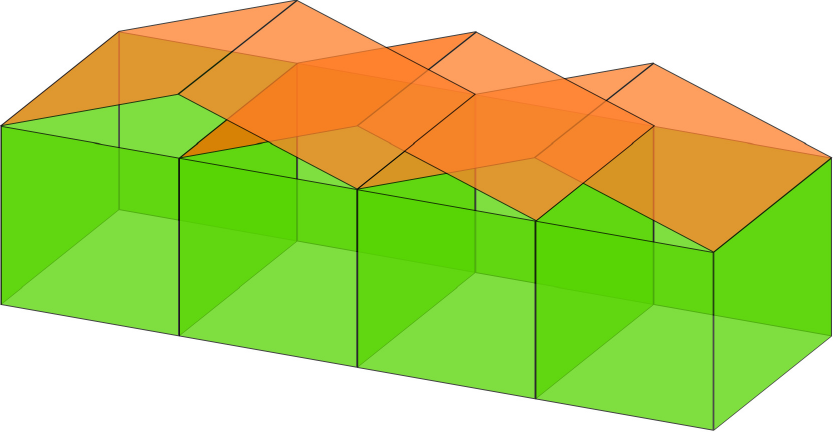} \quad
    \includegraphics[scale = 0.5]{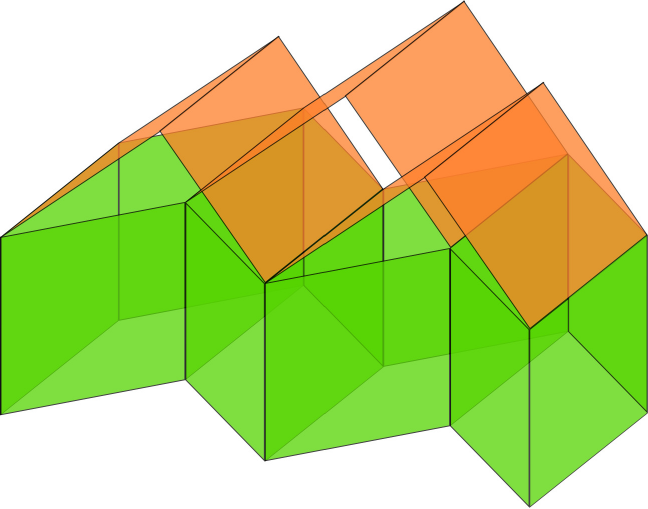}
    \caption{A portion of a periodic polyhedral linkage.}
    \label{fig:periodic}
\end{center}
\end{figure}

Extending infinitely, we obtain a periodic polyhedral linkage whose configuration space is $1$-dimensional, and whose periodicity is preserved during flexion. The direction of the periodicity vector does not change, only the magnitude. This construction can also be embedded by using parallelograms instead of rectangles for the `roofs', see Figure~\ref{fig:embedded-periodic}.

\smallskip

\begin{figure}
\begin{center}
    \includegraphics[scale = 0.5]{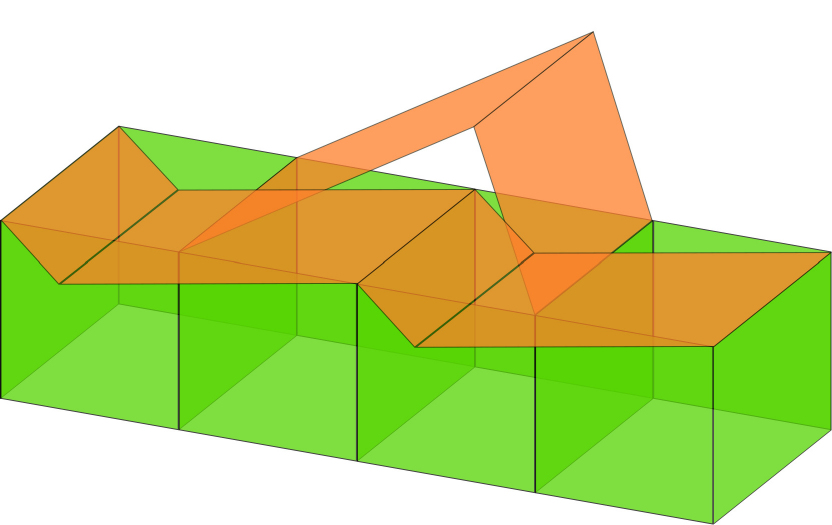} \quad
    \includegraphics[scale = 0.5]{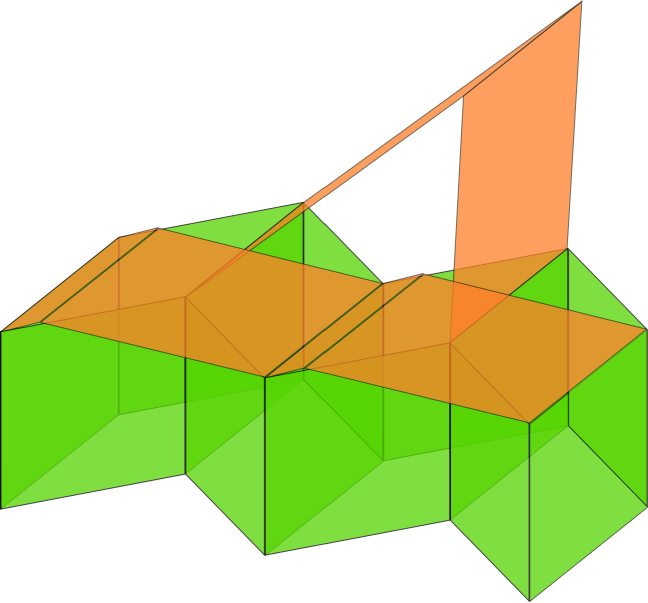}
    \caption{A portion of an embedded periodic polyhedral linkage.}
    \label{fig:embedded-periodic}
\end{center}
\end{figure}

Restricting this linkage to a connected component of the embedded realization space $C^e(\calP, Z)$, we see that each periodic unit has a length between $(0, 2)$ as measured along the $y$-axis. Our construction gives the following theorem:

\begin{theorem}
    \label{thm:extender}
    Let $0 < a < b$ and $\epsilon > 0$. There exists an embedded polyhedral linkage $\calP$ with two faces $F_1, F_2$ and a $1$-dimensional flex such that $F_1$ and $F_2$ remain parallel translates of each other during flexion while the distance between them varies continuously in the range $(a, b)$, and $\calP$ can be contained in a cylinder of radius $\epsilon$ and length $b - a$ under any realization.
\end{theorem}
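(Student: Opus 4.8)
The plan is to build $\calP$ by gluing together two pieces along a common square face: a rigid prismatic ``spacer'' of axial length $a$, contributing a fixed offset, and a rescaled, $n$-fold copy of the embedded periodic linkage of Figure~\ref{fig:embedded-periodic}, contributing the continuously varying length. Taking $n$ large will force the whole assembly to be thin.

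First I would take one unit of the embedded periodic construction --- the extruded Sarrus cell together with the $X,Y$ caps and the parallelogram roofs --- and rescale all of its edge lengths by the factor $\mu := (b-a)/(2n)$. I would then stack $n$ such rescaled units, identifying the $EFF'E'$ face of copy $i$ with the $ABB'A'$ face of copy $i+1$ and inserting the $CD$-to-$CD$ parallelogram roofs between consecutive copies, exactly as described above the theorem. By the discussion of this section, the result is a pure, proper polyhedral $2$-complex which is embedded on a connected component $E$ of its relative configuration space, whose flex is $1$-dimensional, and on which --- because the roofs make every constituent cell flex at a common rate --- the bottom face $ABB'A'$ of copy $1$ and the top face $EFF'E'$ of copy $n$ stay parallel translates of one another along a fixed normal direction, at a distance equal to $n$ times the common cell length, and hence sweeping continuously over $(0, 2n\mu) = (0, b-a)$. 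Next I would glue a rigid rectangular prism of axial length $a$ and transverse radius less than $\mu$ onto the still-free $ABB'A'$ face of copy $1$, taking $F_1$ to be the far face of the prism and $F_2$ to be the top face $EFF'E'$ of copy $n$. Throughout the flex, $F_1$ and $F_2$ remain parallel translates, and $\operatorname{dist}(F_1,F_2)$ equals $a$ plus the stack length, hence varies continuously over $(a,b)$.

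For the size bound, each rescaled cell together with its caps and roofs lies within transverse distance $C\mu$ of the common axis, where $C$ is an absolute constant governed by the fixed aspect ratio of the unit cell (independent of $n$, $a$, $b$); adding the spacer, of transverse radius less than $\mu$, only changes this constant. Thus $\calP$ lies within transverse distance $C'\mu = C'(b-a)/(2n)$ of that axis in every realization in $E$, and choosing $n > C'(b-a)/(2\epsilon)$ makes this less than $\epsilon$. The axial extent of $\calP$ is $a$ plus the stack length, which is at most $b$, the flexing portion alone occupying axial length at most $b-a$; so $\calP$ is contained in a cylinder of radius $\epsilon$ as required.

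The main obstacle I anticipate is verification rather than a new idea: one must check that each of the three operations used to form $\calP$ --- rescaling, stacking $n$ copies with roofs, and attaching the rigid spacer --- simultaneously preserves purity and properness of the complex, the existence of an embedded component, the one-dimensionality of the flex, and the parallel-translate behavior of the end faces. In particular, one must confirm that the parallelogram roofs --- which are exactly what makes the periodic construction embeddable --- neither introduce an extra degree of freedom nor obstruct the collapse of the cell length toward $0$, so that $\operatorname{dist}(F_1,F_2)$ attains every value in the open interval $(a,b)$ and not merely a proper subinterval. Most of these points are already treated informally in the preceding subsections, so the remaining task is to record them as a precise argument.
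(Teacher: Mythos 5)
Your proposal is correct and follows essentially the same route as the paper: rescale the periodic embedded construction so that an integer number of units spans $b-a$ (with the unit small enough to control the transverse size), stack them, and append a rigid component of length $a$. The paper's proof is just a compressed version of this, with the verification points you list (purity, properness, embeddedness, $1$-dimensional flex, parallel end faces) already established in the discussion preceding the theorem.
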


\begin{proof}
    We scale our given construction so that the side length of each of the squares is a number $d$ such that (1) under any flexion, each unit of the linkage is contained in a ball of radius $\epsilon$, and (2) $b - a = N \cdot 2d$ for some integer $N$. Thus a linkage with $N$ units, along with a rigid component of length $a$, will vary continuously in length between $a$ and $b$.
\end{proof}

We call this construction an \textit{extender linkage}. To simplify future figures, we represent an extender with a red rectangular prism which is understood to extend and contract in length. In contrast, blue faces are rigid faces and cannot be extended. (See Figure \ref{fig:register}.)
{}
\section{Scalar computation}
\label{section:scalar-computation}

Suppose we have an array of extender linkages, pointed parallel to the $y$-axis in the $xy$-plane, and spaced at equal intervals so that the linkages never intersect under any realization. We fix one end of each extender on the $x$-axis and allow the other end to move freely parallel to the $y$-axis in a set range. Each linkage encodes a single real number represented by its length, and in this section we will describe how to use these extenders to perform computations on these inputs via embedded polyhedral linkages. (See Figure \ref{fig:register}.)

\smallskip

\begin{figure}
\begin{center}
    \includegraphics[scale = 0.5]{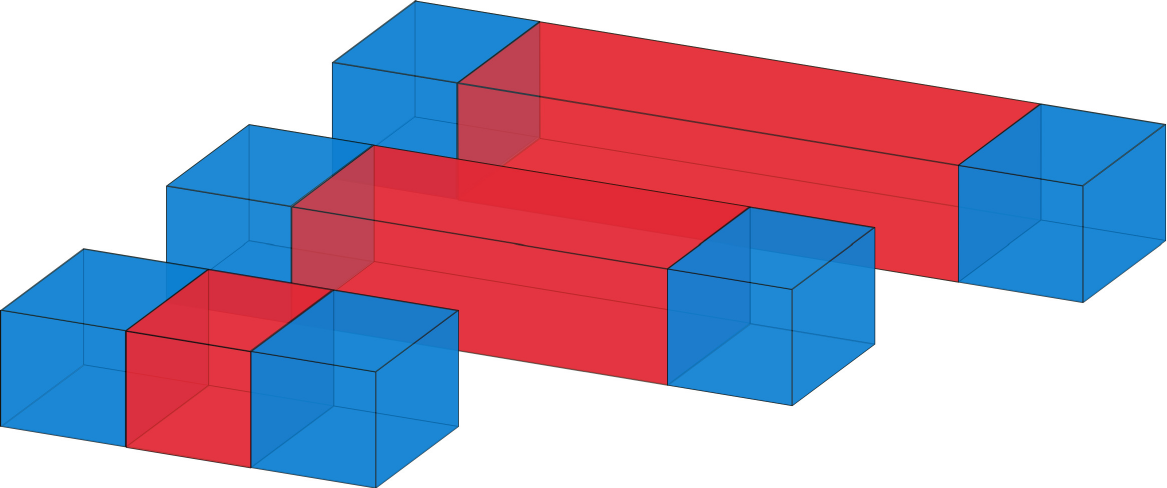}
    \caption{A set of extender linkages representing computational vertices.}
    \label{fig:register}
\end{center}
\end{figure}

Every extender linkage is taken to have the same range, $(a, b)$, for two real numbers $0 << a < b$ to be chosen later. The midpoint $m = \frac{1}{2}(a + b)$ is taken to represent $0$, and in general, an extender linkage that is extended a distance $y$ from the $x$-axis represents the value $y - m$. In this case we call the extender a \textit{computational linkage} and say that $x-m$ is its \textit{value}. We set $N = b - m$, so a computational linkage may represent any value in the range $(-N, N)$. The array of $n$ computational linkages will be called a \textit{register}. We refer to the individual computational linkages as $C_1, \dotsc, C_n$ and their values as $|C_1|, \dotsc, |C_n|$, respectively.

\smallskip

Computations are decomposed into a sequence of elementary unary and binary operations which are performed on registers. Each operation takes one or two computational linkages as inputs, and stores its outputs in the values of another computational linkages. To embed this operation, computations are performed vertically. For a single operation, an output register is created, aligned in the $xy$-plane but offset vertically in the $z$-axis with respect to the input register. The output of the operation is attached to the output computational linkages in the output register. The values of all other original computational linkages are preserved by attaching each to their offset counterpart with rigid linkages.

\smallskip

Let $U \subset (-N, N)$ be a connected set and $f : U \to (-N, N)$ be a function. We say that we can \textit{simulate} the function $f$ by computational linkages if there is an embedded polyhedral linkage which connects an input computational linkage $C_i$ to an output computational linkage $C_i'$, such that if $|C_i| \in U$, then $|C_i'| = f(|C_i|)$. This definition also naturally generalizes if $U$ is a connected subset of $(-N, N) \times (-N, N)$ using two input computational linkages $C_i, C_j$.

\smallskip

In the following subsections, we describe constructions which simulate elementary operations, including addition and multiplication by scalars, negation, addition, and multiplication. Together, these allow us to simulate any polynomial function on any bounded set.

\subsection{Swap and copy}

Before constructing polyhedral linkages to perform actual computations, we need to set up basic operations to manipulate memory stored in registers. We describe operations to swap two values stored in different computational linkages, as well as copy the value of one computational linkage to another.

\smallskip

Let $C_1, \dotsc, C_n$ be the original computational linkages, and let $C_1', \dotsc, C_n'$ be the updated computational linkages. The swap operation $s_i$ sends $|C_i| \mapsto |C_{i+1}'|$ and $|C_{i+1}| \mapsto |C_i'|$. For all $j \not \in \{i, i+1\}$, we have $s_i : |C_j| \mapsto |C_j'|$. By composition, we can freely permute the computational linkages and so we will always assume that the input and output computational linkages are in a desirable configuration.

\smallskip

\begin{figure}
\begin{center}
    \includegraphics[scale = 0.4]{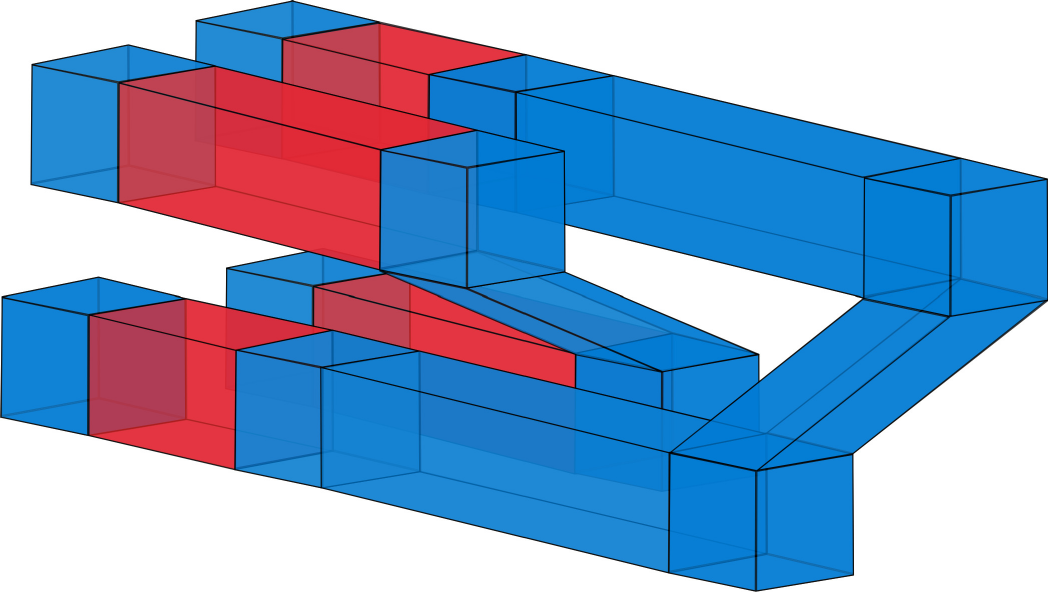} \quad \quad
    \includegraphics[scale = 0.4]{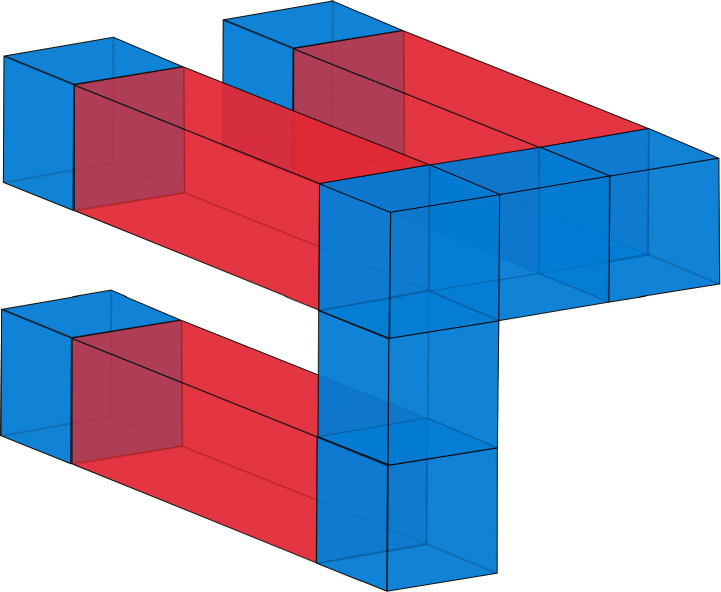}
    \caption{Left: a computational linkage which simulates a swap operation. Right: a computational linkage which simulates a copy operation.}
    \label{fig:swap-copy}
\end{center}
\end{figure}

To simulate $s_i$, first $C_i$ is connected to $C_{i+1}'$ by a rigid set of parallelograms. However, the connection from $C_{i+1}$ to $C_i'$ needs to be rerouted to avoid intersecting with the $C_i$ to $C_{i+1}'$ connection. Thus we add a rectangular prism of length at least $2N$ before adding the parallelograms to ensure that, under any realization, the operation is embedded. All other computational linkages are attached vertically by rectangles. (See Figure~\ref{fig:swap-copy}, left.)

\smallskip

A copy operation is comparatively simpler, because there is no need for rerouting. First, $C_i$ is connected to $C_i'$ by a set of rectangles, and then $C_i'$ is connected to $C_{i+1}'$ by a rigid linkage. The entire construction is rigid, and all other computational linkages are attached vertically. (See Figure~\ref{fig:swap-copy}, right.)

\smallskip

Thus we can always assume without loss of generality that our linkages are in a desirable configuration. Usually, this means that input and output computational linkages are aligned vertically, or else are offset by one or two linkages. Additionally, we will assume enough space between linkages so that our constructions will always be embedded.

\subsection{Scalar addition} 

For any constant $\lambda$, we can simulate the operation $x \mapsto x + \lambda$ on an appropriate domain by adding a rigid linkage.

\begin{lemma}
    For $\lambda \in (0, N)$, the function $x \mapsto x + \lambda$ on the domain $(-N, N - \lambda)$ and the function $x \mapsto x - \lambda$ on the domain $(\lambda - N, N)$ can be simulated by computational linkages.
\end{lemma}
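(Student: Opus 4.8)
The plan is to construct the required polyhedral linkage by attaching a single \emph{rigid} extender-like component (a rectangular prism of fixed length) between the input computational linkage $C_i$ and the output computational linkage $C_i'$, oriented parallel to the $y$-axis so that the total displacement of the free end of $C_i'$ from the $x$-axis equals the displacement of $C_i$ plus the fixed length of the rigid component. Since an extender at displacement $y$ encodes the value $y - m$ where $m = \tfrac12(a+b)$, inserting a rigid segment of length $\lambda$ (measured along the $y$-axis) shifts the encoded value by exactly $\lambda$; that is, it realizes $x \mapsto x + \lambda$ on whatever range keeps both extenders within their legal window $(a,b)$. First I would make precise the attachment: using the rigid parallelogram/rectangle connections already introduced in the swap-and-copy subsection, connect the movable face $F_2$ of $C_i$ (at $y$-displacement $|C_i| + m$) to the fixed face $F_1$ of $C_i'$ via a rigid prism of length $\lambda$, so that $F_1$ of $C_i'$ sits at $y$-displacement $|C_i| + m + \lambda$, forcing $|C_i'| = |C_i| + \lambda$.

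Next I would verify the domain bookkeeping. The input $|C_i|$ must lie in $(-N, N)$, which in displacement terms is $(a, b)$; the output extender's free face must also lie in $(a, b)$, i.e.\ $|C_i'| = |C_i| + \lambda \in (-N, N)$, which forces $|C_i| \in (-N, N - \lambda)$. This is exactly the stated domain for the $+\lambda$ operation. For the $-\lambda$ operation, I would route the rigid prism in the opposite direction (or equivalently attach it so that $C_i'$ begins at displacement $|C_i| + m - \lambda$), giving $|C_i'| = |C_i| - \lambda$ on the domain $(\lambda - N, N)$ by the symmetric computation. Then I would confirm embeddedness: the output register is a vertical ($z$-axis) translate of the input register, the connecting prisms and rectangles for $C_i$ and for all untouched linkages $C_j$ are the rigid vertical connectors already shown to be embeddable in the swap-and-copy discussion, and with the standing assumption of sufficient horizontal spacing between computational linkages no two components intersect. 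Finally I would note that the untouched computational linkages $C_j$ ($j \ne i$) are carried to $C_j'$ by the rigid vertical rectangles, so the whole construction indeed simulates the claimed function as a map of registers.

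The only mild subtlety — and the closest thing to an obstacle — is orientation and the sign convention: one must be careful that ``adding length along $y$'' increases the encoded value rather than decreasing it, and that when we instead want $x \mapsto x - \lambda$ the rigid component is inserted on the correct side so that the output face still lies above the $x$-axis (the extenders only extend in the positive $y$-direction from the $x$-axis). This is handled purely by choosing which of the two faces $F_1, F_2$ of each extender is pinned and on which side the rigid prism is glued; no geometric difficulty arises because a rigid prism of length $\lambda < N$ comfortably fits within the available vertical room given $0 \ll a < b$. Everything else is routine: the rigidity of the prism is immediate from Theorem~\ref{thm:extender}'s building blocks with $a = b$ (a degenerate extender, i.e.\ a rigid prism), and the commuting-diagram/embedded-component requirements of a functional linkage follow from the corresponding facts for the swap and copy linkages already established.
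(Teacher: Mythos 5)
Your construction is correct and is essentially the paper's own proof: the paper likewise simulates $x \mapsto x + \lambda$ by attaching a rigid structure of length $\lambda$ to the end of $C_i$ before connecting to $C_i'$, with the stated domains coming from keeping both extenders inside their range $(-N,N)$. Your additional domain bookkeeping and embeddedness remarks only flesh out details the paper leaves to the figure.
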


\begin{proof}
    For $\lambda \in (0, N)$, we can define the operation of scalar addition via the linkage shown in Figure~\ref{fig:scalar-addition-negation}, left.) A rigid structure is attached to the end of the computational linkage $C_i$, which adds an offset of length $\lambda$ before attaching to the end of the new computational linkage $C_i'$.

    \begin{figure}
    \begin{center}
        \includegraphics[scale = 0.35]{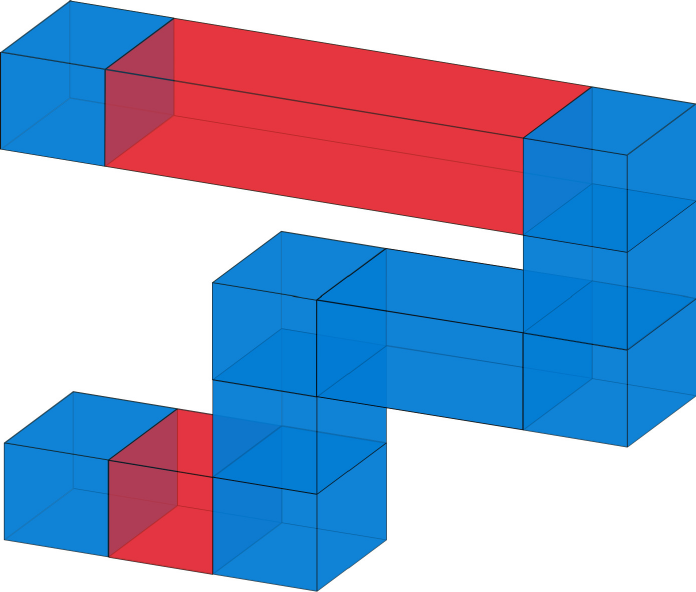} \quad 
        \includegraphics[scale = 0.35]{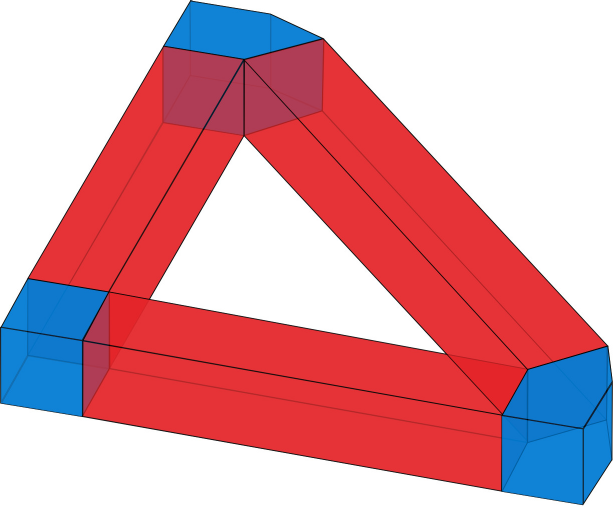} \quad
        \includegraphics[scale = 0.35]{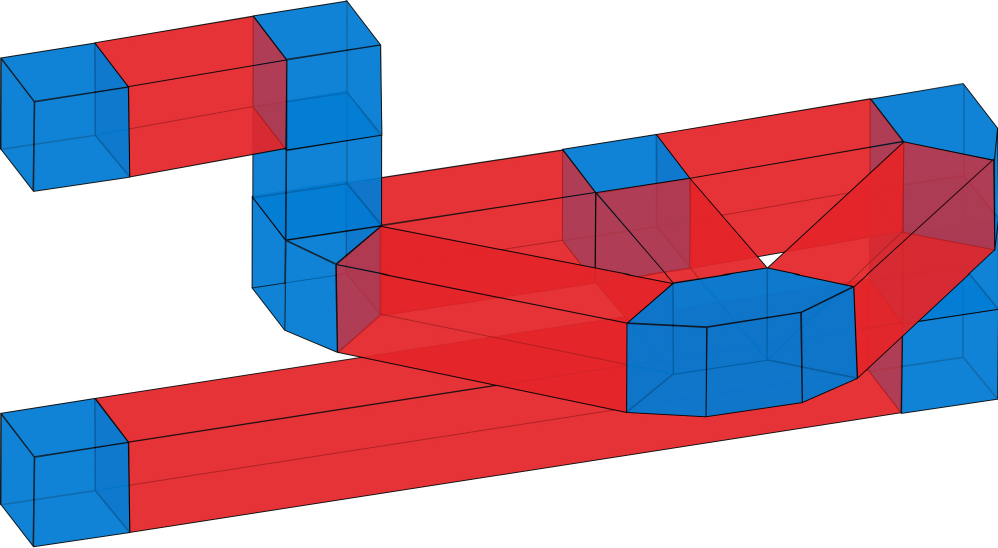}
        \caption{Left: a computational linkage for scalar addition. Middle: a $\left(\frac{\pi}{4}, \frac{\pi}{4}, \frac{\pi}{2}\right)$ triangle linkage. Right: a polyhedral linkage for negation.}
        \label{fig:scalar-addition-negation}
    \end{center}
    \end{figure}
\end{proof}

\subsection{Negation}

We can simulate the operation $x \mapsto -x$ on an appropriate domain by combining several extender linkages orthogonally.

\begin{lemma}
    The function $x \mapsto -x$ on the domain $(-\frac{1}{2}N, \frac{1}{2}N)$ can be simulated by computational linkages.
\end{lemma}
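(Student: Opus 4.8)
The plan is to realize negation by using a rigid right-isoceles triangle linkage as a "reflector", so that an extension along one leg is transferred to a contraction along the other leg. Concretely, I would place a rigid $\left(\tfrac{\pi}{4}, \tfrac{\pi}{4}, \tfrac{\pi}{2}\right)$ triangle (the middle picture in Figure~\ref{fig:scalar-addition-negation}) so that its hypotenuse lies along the line $y = m$ (the "zero" position of the register, shifted into the $z$-axis to keep the construction vertical and embedded), with one vertex pinned to the fixed end of the input computational linkage $C_i$ and the right-angle vertex pinned at a fixed reference point. The free end of $C_i$ is then attached by a rigid parallelogram linkage to a point that slides along one leg of the triangle, and a symmetric rigid linkage connects the corresponding point on the other leg to the free end of the output computational linkage $C_i'$. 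Because the triangle is rigid and the two legs are reflections of each other across the perpendicular bisector of the hypotenuse, moving the slide point a signed distance $x$ up one leg moves the mirror slide point a signed distance $-x$ up the other leg (after appropriate rescaling by the $\sqrt{2}$ factor coming from the $45^\circ$ angle, which I fold into the lengths of the rigid linkages). This forces $|C_i'| = -|C_i|$.

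The steps, in order: (1) Fix the triangle linkage as above and check its configuration space relative to the two pinned vertices is a single point, so it is genuinely rigid. (2) Build the two rigid transfer linkages out of parallelograms (as in the swap/copy constructions of the previous subsection) connecting the free ends of $C_i$ and $C_i'$ to mirror-image points on the two legs; here I must choose the attachment geometry so that a displacement of magnitude $y - m$ along the $y$-axis at $C_i$ corresponds to the correct displacement along the leg. (3) Verify the length bound: if $|C_i| \in \left(-\tfrac12 N, \tfrac12 N\right)$, then the image slide point stays within the extender's legal range, so $|C_i'| = -|C_i| \in \left(-\tfrac12 N, \tfrac12 N\right) \subset (-N, N)$ — this is exactly why the domain must be restricted to half the register range. (4) Attach all other computational linkages $C_j$, $j \neq i$, to their offset counterparts by rigid rectangles (the copy construction), and check the whole thing is embedded by spacing the register widely enough and by inserting a length-$\geq 2N$ rectangular prism in the transfer path if the two transfer linkages would otherwise collide, exactly as in the swap construction.

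The main obstacle I expect is \textbf{embedding}: the input extender, the output extender, the rigid triangle, and the two transfer linkages all live near the same plane, and the transfer linkage from $C_i$ to one leg has to cross over toward the mirror leg without meeting the transfer linkage from the other leg to $C_i'$, nor meeting $C_i$ or $C_i'$ themselves. Handling this cleanly requires routing one transfer linkage "above" the triangle's plane in the $z$-direction with enough clearance (a prism of length at least $2N$, as in the swap operation), and arguing that under every realization in a fixed connected component of $C^e(\calP, Z)$ the interiors of all maximal faces stay pairwise disjoint. The triangle's rigidity makes the geometric bookkeeping manageable, but writing down explicit coordinates and clearances that work uniformly over the whole flex is the delicate part; everything else (rigidity of the triangle, the $-x$ relation, the domain restriction) is a short computation.
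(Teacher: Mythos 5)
There is a genuine gap at the heart of your mechanism. You use a \emph{rigid} $\left(\frac{\pi}{4},\frac{\pi}{4},\frac{\pi}{2}\right)$ triangle as a passive ``reflector'' and attach the input extender to a point sliding along one leg and the output extender to ``the corresponding point'' on the other leg. But nothing in your construction couples the two slide points: a rigid triangle imposes no constraint relating a point moving along one leg to a point moving along the other, and the mirror symmetry of a static object does not transfer motion. As described, $|C_i'|$ is simply unconstrained by $|C_i|$, so the relation $|C_i'| = -|C_i|$ is asserted rather than enforced. Moreover, ``a point that slides along a leg'' is not a primitive in this model — prismatic motion is exactly what an extender linkage provides — which points to the fix: in the paper the triangle linkage is \emph{not} rigid. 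It is built from three extender linkages (two orthogonal legs joined by a hypotenuse extender at angle $\frac{\pi}{4}$ to both), and since extenders preserve their direction during flexion, the fixed angles force all three to flex at the same rate, so the two legs always have equal length. Negation is then obtained by gluing two such triangle linkages along a common edge, which forces the ``positive'' and ``negative'' legs to share the same length in every realization.

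Your justification of the domain is also off, and this is symptomatic of the missing mechanism. You argue the restriction to $\left(-\frac{1}{2}N, \frac{1}{2}N\right)$ is needed so that $-|C_i|$ stays in the extender's range; but negation maps $(-N,N)$ to itself, so that argument would impose no restriction at all. The actual source of the factor $\frac{1}{2}$ in the paper is that the bare two-triangle mechanism only simulates $x \mapsto -x$ on $(0,N)$ (leg lengths are positive), and one must conjugate by scalar additions $x \mapsto x + \mu$ with $\mu = \frac{N}{2}$ (together with correcting the offset from the rigid central piece) to move the domain to a neighborhood of $0$; the optimal symmetric domain is then $\left(-\frac{1}{2}N, \frac{1}{2}N\right)$. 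Your embedding discussion (vertical offsets, spacing, rerouting with a long prism as in the swap operation) is in the right spirit, but it does not repair the missing coupling or the domain argument.
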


\begin{proof}
    We can simulate the operation of negation $x \mapsto -x$ on the domain $(0, N)$, by using two copies of the $\left(\frac{\pi}{4}, \frac{\pi}{4}, \frac{\pi}{2}\right)$ triangle linkage. (Figure~\ref{fig:scalar-addition-negation}, middle.) Extender linkages are attached at angle $\frac{\pi}{2}$ and connected by a third extended linkage, attached at angle $\frac{\pi}{4}$ relative to both legs. This constrains all three linkages to flex at the same rate, and the perpendicular sides will always be the same length. The negation linkage is formed by joining two $\left(\frac{\pi}{4}, \frac{\pi}{4}, \frac{\pi}{2}\right)$ triangle linkages along a common edge, enforcing the positive and negative legs to have the same length during all realizations. (See Figure~\ref{fig:scalar-addition-negation}, right.) The offset induced by the rigid central cube should be considered, but we can postcompose with a scalar addition operation. We leave the details to the reader.

    \smallskip

    Further, we can adjust the simulated domain to include a neighborhood of $0$ by precomposing and postcomposing with scalar addition $x \mapsto x + \mu$. The largest domain centered at $0$ is obtained by choosing $\mu = N/2$, which lets us define a negation map on the domain $(-\frac{1}{2}N, \frac{1}{2}N)$.
\end{proof}

\subsection{Scalar multiplication}

For any constant $\lambda$, we can simulate the operation $x \mapsto \lambda \cdot x$ on an appropriate domain by using a modified pantograph.

\begin{lemma}
    For $\lambda \in (1, N)$, the function $x \mapsto \lambda x$ on the domain $(-\frac{1}{2\lambda}N, \frac{1}{2\lambda}N)$ can be simulated by computational linkages. For $\lambda \in (0,1)$, the functions $x \mapsto \lambda x$ on the domain $(-\frac{1}{2}N, \frac{1}{2}N)$ can be simulated by computational linkages.
\end{lemma}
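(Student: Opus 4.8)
The plan is to realize a polyhedral version of the classical \emph{pantograph} and to plug the input and output computational linkages into its two ports. Recall that the planar pantograph is a parallelogram linkage with a fixed pivot $O$ and two distinguished points $P, P'$ that are kept collinear with $O$ and satisfy $P' - O = \lambda(P - O)$ throughout its single-parameter flex; consequently, when $P$ is driven along a line through $O$, the point $P'$ sweeps the $\lambda$-scaled image along the same line. I would build the parallelogram out of rigid bars (the flex is supplied by the parallelogram angle, so no extenders are needed inside it), extrude it to three dimensions, and add rigid parallelogram `roofs' — exactly as in the periodic construction and in the swap and negation linkages (Figure~\ref{fig:scalar-addition-negation}) — both to remove the spurious out-of-plane flexibility gained upon extrusion and to make the gadget embedded. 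The moving end of $C_i$ is coupled to the port $P$ and the moving end of $C_i'$ to the port $P'$, each through a short rigid linkage, so that the length of $C_i'$ becomes an affine function of the length of $C_i$ with slope $\lambda$.

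The additive constant in that affine function is determined by the placement of $O$ and the lengths of the coupling bars; for embedding reasons $O$ cannot be placed at the point representing the value $0$, so a nonzero constant is unavoidable. Following the negation lemma, I absorb it by precomposing and postcomposing with the scalar-addition maps of the previous lemma, choosing the shift so as to maximize the resulting domain centered at $0$ — the analogue of taking $\mu = N/2$ there. For $\lambda \in (1, N)$ this optimization produces the interval $(-\frac{1}{2\lambda}N, \frac{1}{2\lambda}N)$, whose image $(-\frac12 N, \frac12 N)$ lies inside $(-N, N)$, so that $C_i'$ is always a legitimate computational linkage.

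For $\lambda \in (0,1)$, I would run exactly the same linkage built for the ratio $1/\lambda > 1$, but interchange the roles of the two ports — reading the pantograph `backwards', so that $P$ records the output and $P'$ the input. This simulates $x \mapsto \lambda x$, and since the smaller port now carries the input the binding range constraint changes: the optimal centered domain becomes the full $(-\frac12 N, \frac12 N)$, with image $(-\frac{\lambda}{2} N, \frac{\lambda}{2} N) \subset (-N, N)$.

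The routine parts — fixing the bar lengths, the pivot location, the scalar shifts, and the spacings that guarantee embeddedness — follow the template already used in the earlier elementary constructions, and I would leave them to the reader as the paper does throughout this section. The step I expect to be the real obstacle is checking that the three-dimensional gadget has \emph{exactly} a one-dimensional configuration space: the parallelogram mechanism contributes its own angular parameter and, after extrusion, a potential out-of-plane wobble, and one must verify that the roofs pin it to a single degree of freedom locked to the lengths of $C_i$ and $C_i'$, and that the resulting forgetful map onto the input values is a regular branched cover of the claimed open interval. This is the higher-dimensional counterpart of the assertion ``this constrains all three linkages to flex at the same rate'' in the negation argument, now carried out for the pantograph together with its roofs.
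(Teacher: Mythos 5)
Your proposal is correct and follows essentially the same route as the paper: a rigidified pantograph extruded to three dimensions with the input/output computational linkages attached at offset heights, the domain re-centered at $0$ by pre- and post-composing with scalar addition (the analogue of $\mu = \frac{N}{2\lambda}$, resp.\ $\mu = \frac{N}{2}$), and the case $\lambda \in (0,1)$ handled by exchanging the roles of the input and output ports. The rigidification and embeddedness details you flag as the main obstacle are exactly the points the paper likewise delegates to the reader (auxiliary vertices $X$, $Y$ and the extrusion details), so there is no substantive divergence.
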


\begin{proof}
    We prove the case when $\lambda \in (1, N)$ first. Consider the rigidified pantograph linkage used for scalar multiplication (Figure~\ref{fig:scalar-multiplication-addition}, left) in planar linkages. We fix the lengths $|AC| = \lambda |AB|$ and $|CF| =~\lambda |EF|$. Vertex $A$ is fixed at the origin, $D$ is used as an input vertex, and $F$ is used as an output vertex. The structure is rigidified so that the pairs of edges $AB, BC$ and $CE, EF$ remain parallel. To preserve embeddedness, this is achieved by adding the auxiliary vertices $X$ and $Y$ along with the triangles $ABX, BCX, CEY$ and $EFY$. From the geometry of the construction, the linkage constrains $|AF| = \lambda |AD|$, achieving scalar multiplication. (See \cite[\S 6.2]{KM})

    \smallskip

    \begin{figure}
    \begin{center}
        \includegraphics[scale = 1]{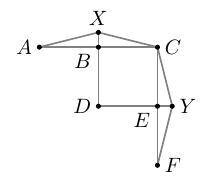}
        \includegraphics[scale = 0.33]{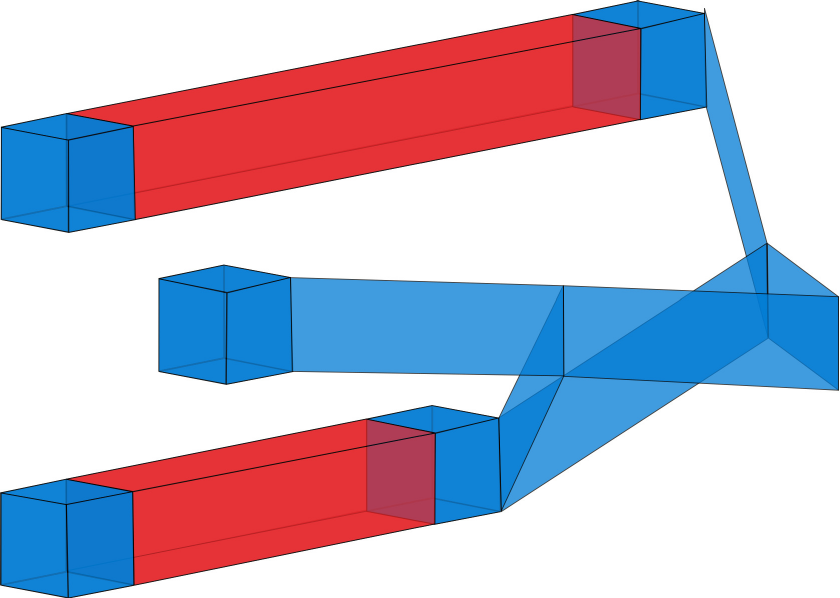} \quad
        \includegraphics[scale = 0.35]{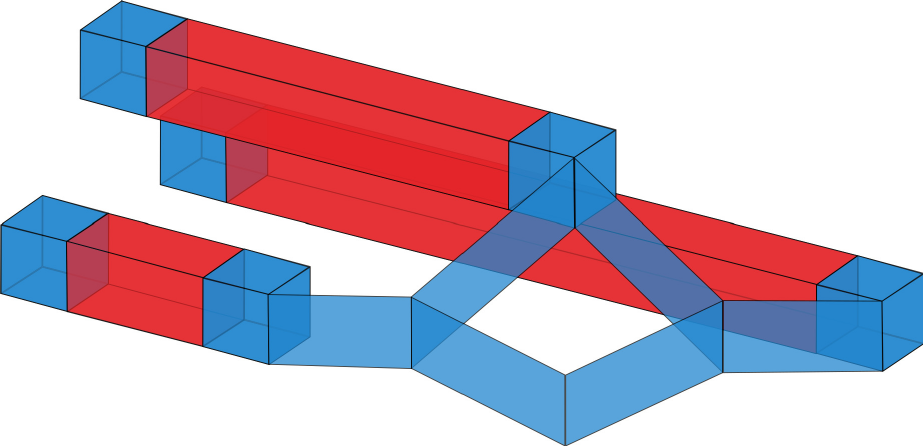}
        \caption{Left: a rigidifed pantograph. Middle: a computational linkage for scalar multiplication. Right: a computational linkage for half-addition.}
        \label{fig:scalar-multiplication-addition}
    \end{center}
    \end{figure}

    We take this construction and extrude it to three dimensions (Figure \ref{fig:scalar-multiplication-addition}, middle.) The vertices corresponding to $D$ and $F$ are offset vertically so that the corresponding computational linkages do not intersect. (Note: for figure clarity, we do not include the extrusion of the auxiliary vertices corresponding to $X$ and $Y$, and the details of rigidifying are left to the reader.) This simulates the operation $x \mapsto \lambda x$ on the domain $(0, \frac{N}{\lambda})$. This domain is limited because nonpositive inputs would cause self intersections.

    \smallskip

    As with negation, we can adjust the simulated domain to include a neighborhood of $0$ by precomposing with $x \mapsto x + \mu$ and then postcomposing with $x \mapsto x - \lambda \mu$. The largest domain centered at $0$ is obtained by choosing $\mu = \frac{N}{2\lambda}$, which lets us define scalar multiplication on the domain $(-\frac{N}{2\lambda}, \frac{N}{2\lambda})$. Lastly, we can handle multiplication by $\lambda \in (0, 1)$ by switching the input and output computational linkages. This is the same process as in planar linkages. (See \cite[\S 6.2]{KM}.) Also note that in this case we can initially simulate the operation $x \mapsto \lambda x$ on the domain $(0, N)$ instead of $(0, \frac{N}{\lambda})$, which leads to the optimal value $\mu = \frac{N}{2}$.
\end{proof}

\subsection{Addition}

We can simulate the operation $(x, y) \mapsto x + y$ on an appropriate domain by using a modified pantograph to perform half-addition and then postcomposing with multiplication by $2$.

\begin{lemma}
    The function $(x, y) \mapsto x + y$ on the domain $(-\frac{1}{4}N, \frac{1}{4}N) \times (-\frac{1}{4}N, \frac{1}{4}N)$ can be simulated by computational linkages.
\end{lemma}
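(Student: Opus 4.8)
The plan is to follow the strategy announced at the start of this subsection: first simulate the \emph{half-addition} map $(x,y)\mapsto \tfrac12(x+y)$ with a single elementary linkage built from a modified pantograph, and then postcompose with the scalar multiplication operation for $\lambda=2$ furnished by the previous lemma.

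First I would construct the half-adder. Starting from the planar half-addition pantograph, whose output vertex is constrained to the midpoint of the segment joining its two input vertices, I would extrude it to a polyhedral linkage in $\R^3$ exactly as in the scalar multiplication lemma, obtaining the computational linkage of Figure~\ref{fig:scalar-multiplication-addition}, right. The two input computational linkages $C_i,C_j$ are attached to the two input vertices and the output computational linkage $C_k'$ to the output vertex; since all three must meet the same elementary linkage, I offset them at distinct heights along the $z$-axis and, wherever an attachment path would otherwise cross another, insert a rigid rectangular prism of length at least $2N$ to reroute it, just as in the swap construction, so that the whole configuration is embedded under every realization. Rigidifying the extruded pantograph to keep the relevant pairs of faces parallel is achieved by adding auxiliary vertices and triangles as in the scalar multiplication proof, and those details are routine. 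As with negation and scalar multiplication, the raw construction only works for a product domain of positive values, and is recentered about $0$ by precomposing each input with $x\mapsto x+\mu$ and postcomposing the output with $x\mapsto x-\mu$; one checks the largest centered product domain obtained this way is $(-\tfrac12 N,\tfrac12 N)\times(-\tfrac12 N,\tfrac12 N)$, on which the output value $\tfrac12(x+y)$ lies in $(-\tfrac12 N,\tfrac12 N)$.

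Next I would compose with multiplication by $2$. By the scalar multiplication lemma with $\lambda=2$, the map $x\mapsto 2x$ is simulated on $(-\tfrac14 N,\tfrac14 N)$, with output in $(-\tfrac12 N,\tfrac12 N)$. Composition of elementary linkages is effected by stacking vertically, making the output register of the half-adder the input register of the multiplier, so the half-addition map must be restricted so that its output lands in $(-\tfrac14 N,\tfrac14 N)$. Since $\tfrac12(x+y)\in(-\tfrac14 N,\tfrac14 N)$ exactly when $x+y\in(-\tfrac12 N,\tfrac12 N)$, this holds on all of $(-\tfrac14 N,\tfrac14 N)\times(-\tfrac14 N,\tfrac14 N)$, which is contained in the half-adder's domain $(-\tfrac12 N,\tfrac12 N)^2$. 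The composite then simulates $(x,y)\mapsto 2\cdot\tfrac12(x+y)=x+y$ on $(-\tfrac14 N,\tfrac14 N)\times(-\tfrac14 N,\tfrac14 N)$, as claimed.

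The main obstacle I expect is the embeddedness bookkeeping in the first step: unlike the unary operations, the half-adder must route two input extenders and one output extender into a single pantograph-type linkage without any of these sub-linkages, or the pantograph arms, colliding during the entire one-parameter flex, which constrains the choice of vertical offsets and rerouting prisms; tracking how these offsets interact with the recentering shifts $\mu$ is the only genuinely delicate part, and everything else reduces to compositions of lemmas already established.
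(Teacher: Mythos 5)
Your proposal follows essentially the same route as the paper: simulate half-addition $(x,y)\mapsto\tfrac12(x+y)$ by extruding the pantograph with $\lambda=2$ (inputs at $A$ and $F$, output at $D$) and postcompose with the scalar-multiplication linkage for $\lambda=2$, with the same domain bookkeeping that forces $\tfrac12(x+y)\in(-N/4,N/4)$ and hence yields $(-\tfrac14 N,\tfrac14 N)\times(-\tfrac14 N,\tfrac14 N)$. The only deviation is your recentering of the half-adder by $\mu$ (the paper treats half-addition as defined on all of $(-N,N)\times(-N,N)$ with no shift), which is harmless for this lemma since $(-\tfrac14 N,\tfrac14 N)^2\subset(-\tfrac12 N,\tfrac12 N)^2$, though the paper later uses the full half-addition domain in the squaring and multiplication lemmas.
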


\begin{proof}
    For planar linkages, we can use the pantograph to simulate the function $x, y \mapsto \frac{1}{2}(x + y)$. (Figure~\ref{fig:scalar-multiplication-addition}, left.) Here we fix $\lambda = 2$ and take the vertices $A$ and $F$ to be inputs and the vertex $D$ to be the output. (See \cite[\S 6.3]{KM}) As with scalar multiplication, we extrude the linkage to three dimensions and offset the input and outputs vertically. (See Figure~\ref{fig:scalar-multiplication-addition}, right.)

    \smallskip

    We can simulate the operation $(x, y) \mapsto x + y$ by postcomposition with $x \mapsto 2x$. Multiplication by $2$ is defined on the domain $D := \{x, y \in (-N, N) \ | \ \frac{1}{2}(x + y) \in (-N/4, N/4)\}$. The largest square domain containing $(0,0)$ in $D$ is $(-N/4, N/4) \times (-N/4, N/4) \subset D$, which gives the simulation of the function $(x, y) \mapsto x + y$ on the domain $(-N/4, N/4) \times (-N/4, N/4)$.
\end{proof}

Remark: This proves the stronger statement that we can simulate $(x, y) \mapsto x + y$ on the domain $D$. We use this fact to simplify the construction of other linkages when we have knowledge about the inputs $x$ and $y$. However, in general, we want a definition of the operation whose domain is not dependent on its inputs, and the choice is inconsequential because we will later choose $N$ large enough to neglect this potential inefficiency.

\subsection{Inversion}

We can simulate the operation $x \mapsto \frac{1}{x}$ on an appropriate domain by using a modified Peaucellier inversor.

\begin{lemma}
    The function $x \mapsto \frac{1}{x}$ on the domain $(\frac{1}{N}, N)$ can be simulated by computational linkages.
\end{lemma}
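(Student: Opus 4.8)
The plan is to realize inversion with a rigidified, extruded Peaucellier inversor, composed with a pantograph that rescales. Recall the planar Peaucellier inversor: a point $O$ is joined by two bars of length $\ell$ to the vertices $A,B$ of a rhombus $PAP'B$ of side length $r<\ell$; then $O$, $P$, $P'$ are always collinear, with $P$ and $P'$ on a common ray from $O$, and $|OP|\cdot|OP'| = \ell^2 - r^2 =: k^2$. First I would rigidify this linkage so it can be embedded: exactly as with the pantograph above, the rhombus and the isosceles frame $OAB$ are stiffened by adjoining auxiliary vertices together with triangular faces, leaving a $1$-dimensional flex. Along a connected component of the embedded configuration space, the distance $|OP|$ of the inner tip sweeps monotonically over the open interval $(\ell-r,\,k)$ while $|OP'|$ sweeps $(k,\,\ell+r)$; the two endpoints correspond to the flat (hence non-embedded, and so excluded) configurations of the rhombus.

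Next I would choose the bar lengths so that $k$ is \emph{large}: concretely $\ell - r < 1/N$ together with $k>N$ (e.g.\ $\ell - r = \tfrac{1}{2N}$, $\ell + r = 4N^3$, giving $k^2 = 2N^2$), and then extrude the whole mechanism to three dimensions. I would fix $O$ as a fixed vertex at height $m$ on the vertical line carrying the input computational linkage $C_i$, and identify the inner tip $P$ with the free endpoint of $C_i$, so that $|OP| = |C_i|$. Because $(1/N,N)\subset(\ell-r,k)$, the value $|C_i|$ stays in the non-degenerate range throughout, so the opposite tip $P'$ is forced to the point at height $m + k^2/|C_i|$ on the same line. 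The remaining computational linkages are copied to the output register by rigid linkages, and all stages are offset in the $z$-direction so nothing collides, just as in the earlier constructions.

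Finally I would append a rigidified pantograph of ratio $1/k^2$ — built as in the scalar-multiplication lemma, but driven in the reduction direction — fixed at $O$ and taking $P'$ as input; it produces an output vertex $F$ with $|OF| = \tfrac{1}{k^2}\cdot\tfrac{k^2}{|C_i|} = \tfrac{1}{|C_i|}$, which again lies in $(1/N,N)$. Connecting $F$ by a rigid linkage to the free endpoint of the output computational linkage $C_i'$ on its parallel line in the offset register yields $|C_i'| = 1/|C_i|$, and the whole linkage stays embedded by the usual vertical offsets, which finishes the simulation on the domain $(1/N,N)$.

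The main obstacle is the range of the Peaucellier inversor. The naïve choice $k=1$, which makes $|OP|\cdot|OP'|=1$ directly, fails: on a connected embedded component the input tip can only move on one side of $1$, so a single such inversor inverts only $(1/N,1)$ or only $(1,N)$, never the whole symmetric interval. The fix is to take $k$ large so that $(1/N,N)$ fits comfortably inside the non-degenerate interval $(\ell-r,k)$, at the price of an intermediate distance $k^2/|C_i|$ that is no longer bounded by $N$; this is harmless because that distance lives inside the inversor, not in a capacity-limited computational linkage, and the (correspondingly enlarged) pantograph divides it back down to $1/|C_i|$. The remaining points — rigidifying the rhombus and frame so the mechanism embeds, and checking that the extruded inversor and pantograph stay disjoint over the full flex — are routine and parallel to the earlier constructions, and I would leave those details to the reader.
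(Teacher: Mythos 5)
Your construction is sound, but it is not the paper's route, and the comparison is worth spelling out. The paper uses the Peaucellier cell directly: it fixes $E$, sets $t^2 = |DE|^2 - |AD|^2 = 1$, attaches the extruded edges at $C$ (input) and $A$ (output) to the computational linkages with a vertical offset, and argues that restricting to an embedded component already rules out the collapse of the square $ABCD$, so no Kapovich--Millson ``hook'' is needed; the domain $(\frac{1}{N}, N)$ is then attributed to the capacity of the extenders. You instead take the inversion constant $k^2 = \ell^2 - r^2$ large enough that $(\frac{1}{N}, N)$ fits inside the non-degenerate input range $(\ell - r, k)$ of a single embedded component, and post-compose with a pantograph of ratio $1/k^2$ to turn $k^2/x$ back into $1/x$. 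Your extra step buys something genuine: as you observe, on one embedded component the inner tip is confined to $(\ell - r, k)$ and the outer to $(k, \ell + r)$, because crossing $|CE| = k$ forces $A = C$, which is precisely the excluded non-embedded configuration --- the same embeddedness that replaces the hook also pins the input to one side of $k$. So with $t^2 = 1$ a single inversor can only invert inputs below $1$ (or only above $1$), whereas the later squaring construction feeds inversion the values $x - 1$ and $x + 1$, which straddle $1$; read literally, the paper's choice $t^2 = 1$ does not by itself deliver the stated domain, and your rescaling (equivalently, choosing $t^2 > N^2$ and absorbing the constant by a scalar multiplication) repairs exactly this point. The price is an auxiliary pantograph with bars of length on the order of $N^3$ and an intermediate vertex excursion up to $k^2 N$, which is harmless since it lives inside the mechanism rather than in a capacity-limited extender, though it does demand correspondingly larger clearances in the vertical offsets; those embedding details, as you say, are routine and parallel to the earlier constructions.
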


\begin{proof}
    Consider the Peaucellier inversor, which is used for inversion in planar linkages. (See Figure~\ref{fig:peaucellier-inversion}, left.) The vertex $E$ is fixed at the origin. The vertex $C$ is used as an input vertex and the vertex $A$ is used as an output. From the geometry of the construction, we see that $|AE| = \frac{t^2}{|CE|}$ where $t^2 = |DE|^2 - |AD|^2$. We choose edge lengths such that $t^2 = 1$. Note that the classical Peaucillier inversor is subject to degenerate configurations when the square $ABCD$ collapses. (See Kapovich and Millson's use of a `hook' to prevent this in \cite[\S6.4]{KM}.) In our case, however, restricting to the embedded realizations prevents this collapse, so no special treatment is needed. (See also \S \ref{sub-section:embedded}.)

    \smallskip{}

    \begin{figure}
    \begin{center}
        \includegraphics[scale = 1]{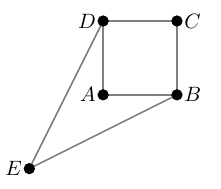}
        \includegraphics[scale = 0.3]{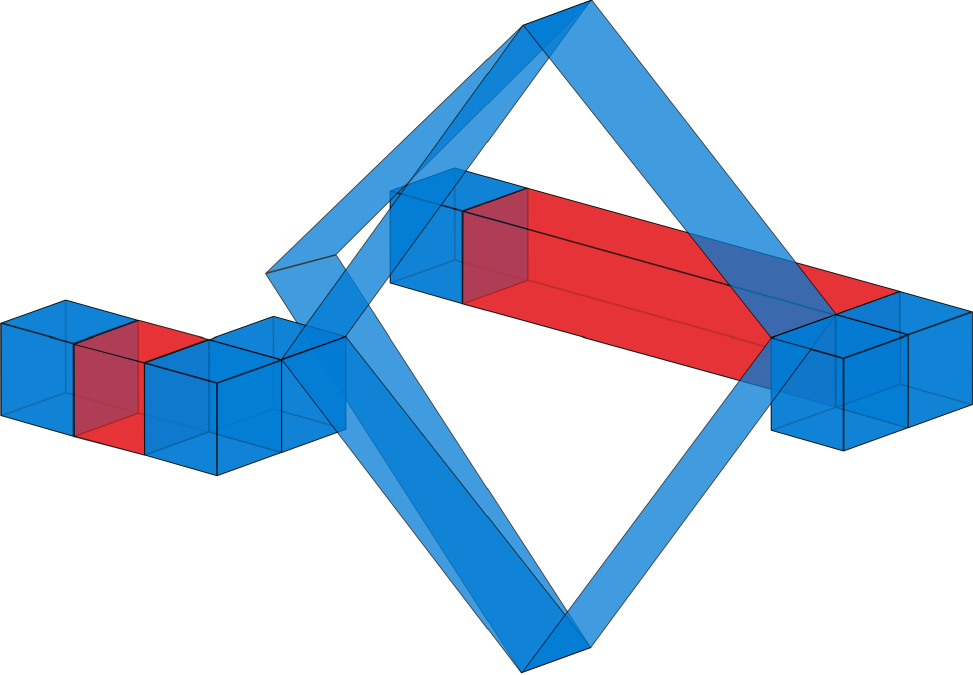}
        \caption{Left: a Peaucellier inversor. Right: a computational linkage for inversion.}
        \label{fig:peaucellier-inversion}
    \end{center}
    \end{figure}

    The edges extruded from the vertices $A$ and $C$ are offset vertically to attach to computational linkages. (See Figure~\ref{fig:peaucellier-inversion}, right.) Choosing appropriate edge lengths, we can simulate the function $x \mapsto \frac{1}{x}$ on the domain $(\frac{1}{N}, N)$, where the bounds come from the possible values representable on computational linkages.
\end{proof}

In this case, we do not worry about finding a domain centered at $0$ because we will never use this linkage directly. We only simulate the operation $x \mapsto \frac{1}{x}$ in order to simulate the operations $x \mapsto x^2$ and $(x, y) \mapsto xy$.

\subsection{Multiplication}

We can simulate the operation $(x, y) \mapsto xy$ on an appropriate domain by using inversion and algebraic identities. To simplify notation, let 
\[
    N^{\ast} = \frac{1}{2} \left(\sqrt{N} - \left(1 + \frac{1}{N}\right)\right)
\] 
for the rest of this section. Importantly, $N^{\ast}$ grows asymptotically as $\sqrt{N}$, so we can always choose a large enough value of $N$ to perform all computations.

\begin{lemma}
    The function $x \mapsto x^2$ on the domain $(-N^{\ast}, N^{\ast})$  can be simulated by computational linkages.
\end{lemma}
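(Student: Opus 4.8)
\section*{Proof plan}

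The plan is to realize squaring from the inversion linkage of the previous lemma together with the scalar and additive operations already constructed, using the telescoping identity
\[
  x^{2} \;=\; \left(\frac{1}{x} - \frac{1}{x+1}\right)^{-1} - x ,
\]
which holds whenever $x>0$ and the three reciprocals appearing in it are defined. The inversion linkage only simulates $t\mapsto 1/t$ on the positive interval $(\tfrac1N,N)$, so a value near $0$ or a negative value cannot be fed into it directly; exactly as in the negation and scalar multiplication lemmas, I would therefore precompose with a scalar shift $x\mapsto u:=x+\mu$ and postcompose with a correction, choosing $\mu$ so that $u$ ranges over a subinterval of $(\tfrac1N,N)$ on which the entire computation stays in range.

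Concretely, I would compose the following elementary operations, retaining spare copies of the relevant values with the copy operation. Copy $x$, then apply scalar addition to form $u=x+\mu$; copy $u$, then apply scalar addition again to form $u+1$. Apply the inversion linkage to $u$ and to $u+1$, and subtract the results (negation of $\tfrac1{u+1}$, followed by half-addition and multiplication by $2$) to get $\tfrac1u-\tfrac1{u+1}=\tfrac1{u(u+1)}$; apply inversion once more to recover $s:=u(u+1)=u^{2}+u$. Subtracting the stored copy of $u$ (again negation, half-addition, doubling) gives $u^{2}$. Finally, since $x^{2}=u^{2}-2\mu x-\mu^{2}$, use the stored copy of the original $x$: form $2\mu x$ by scalar multiplication, negate it, half-add it to $u^{2}$ and double to obtain $u^{2}-2\mu x=x^{2}+\mu^{2}$, and subtract the constant $\mu^{2}$ by scalar addition. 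The composite simulates $x\mapsto x^{2}$.

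The real content, and the only genuine obstacle, is the domain bookkeeping: one must choose $\mu$ so that the intermediate quantities $u$, $u+1$, $u(u+1)$, $\tfrac1{u(u+1)}$, $u^{2}$, $2\mu x$, $x^{2}+\mu^{2}$ each lie in the domain demanded by the corresponding lemma. In particular $u$ and $u+1$ must stay in $(\tfrac1N,N)$, and the product $u(u+1)$ must stay below $N$ so that its reciprocal is again invertible; this last constraint forces $u\lesssim\sqrt N$ and hence caps the width of the domain at order $\sqrt N$. Taking $\mu=\tfrac12\bigl(\sqrt N-1+\tfrac1N\bigr)$, so that $u=x+\mu$ sweeps the symmetric interval $\bigl(\tfrac1N,\ \sqrt N-1\bigr)$ as $x$ sweeps $(-N^{\ast},N^{\ast})$, one verifies — using that $N$ may be taken as large as we like to absorb lower-order terms, and the stronger domain statement for addition where full addition would overflow — that every step above is legitimate. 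The asymptotics $N^{\ast}=\Theta(\sqrt N)$ noted after the statement are precisely the assertion that the output $x^{2}$ is itself representable, i.e.\ $(N^{\ast})^{2}<N$. I would record the identity and the sequence of operations in detail and leave the routine interval-by-interval verification to the reader, as the preceding lemmas already do.
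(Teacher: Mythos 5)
Your architecture is essentially the paper's: shift the input by $\mu$ into the window where inversion is defined, use a partial-fraction identity to produce the reciprocal of a quadratic, invert again, and undo the shift with the correction $u^{2}\mapsto u^{2}-2\mu x-\mu^{2}$ computed from a stored copy of $x$; your choice $\mu=\tfrac12\bigl(\sqrt N-1+\tfrac1N\bigr)$, which sends $x\in(-N^{\ast},N^{\ast})$ to $u\in(\tfrac1N,\sqrt N-1)$, does give exactly the claimed domain. The paper differs only in the identity: it uses $\tfrac12\bigl(\tfrac{1}{x-1}-\tfrac{1}{x+1}\bigr)=\tfrac{1}{x^{2}-1}$ rather than $\tfrac1u-\tfrac1{u+1}=\tfrac{1}{u(u+1)}$, and this is not cosmetic — with the factor $\tfrac12$ built into the identity, half-addition alone already produces $\tfrac{1}{x^{2}-1}$, so no doubling step is ever needed.

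The gap in your version is exactly at the two places where you undo a half-addition by ``multiplication by $2$''. The half-sum $\tfrac12\bigl(\tfrac1u-\tfrac1{u+1}\bigr)=\tfrac{1}{2u(u+1)}$ approaches $N/2$ as $u\to\tfrac1N$, and the half-sum $\tfrac12\bigl(s-u\bigr)=\tfrac{u^{2}}{2}$ approaches roughly $N/2$ as $u\to\sqrt N-1$. Both exceed the domain $(-\tfrac14 N,\tfrac14 N)$ of the doubling operation as stated in the scalar multiplication lemma, and they also fall outside the ``stronger domain $D$'' form of addition you invoke, since that still requires the half-sum to lie in $(-N/4,N/4)$. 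Taking $N$ large does not absorb this: the overflow is proportional to $N$, not lower order. The repair is available inside the paper's own proofs — the pantograph construction simulates $x\mapsto\lambda x$ on the uncentered domain $(0,N/\lambda)$ before the symmetrization, and both of your half-sums are positive and strictly below $N/2$, so citing that version of doubling closes the gap; shrinking the window instead (to force the half-sums below $N/4$) would cost a constant factor and fail to cover all of $(-N^{\ast},N^{\ast})$. Alternatively, adopt the paper's identity and avoid doubling altogether. With that correction, the remaining bookkeeping you sketch (ranges of $u$, $u+1$, $u(u+1)$, $2\mu x$, $x^{2}+\mu^{2}$, and the final scalar subtraction of $\mu^{2}$) does check out.
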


\begin{proof}
    We use the identity
    \[
        \frac{1}{2}\left(\frac{1}{x-1} - \frac{1}{x+1}\right) = \frac{1}{x^2 - 1}
    \]
    Thus we can simulate the function $x \mapsto x^2$ by composing with scalar addition, inversion, negation, and addition. The domain is only restricted by the scalar addition and inversion. Note that we only use half-addition,
    \[
        \left( \frac{1}{x-1},\ \frac{-1}{x+1}\right) \mapsto \frac{1}{2} \left(\frac{1}{x-1} + \frac{-1}{x+1}\right)
    \]
    which is defined on the domain $(-N, N) \times (-N, N)$. Also because $x \mapsto \frac{1}{x}$ on the domain $(\frac{1}{N}, N)$ produces only positive outputs, we can use inversion defined on the domain $(0, N)$. Thus no restrictions are imposed by addition and negation.

    \smallskip

    In total, we can simulate the function $\ x ~\mapsto ~x^2$ on the domain $\left(1 + \frac{1}{N}, \sqrt{N}\right)$. To adjust this domain to include $0$, we can precompose by $x \mapsto x + \mu$ and postcompose with $y \mapsto y - 2x \mu - \mu^2$, where $x$ is the original input to the function. The optimal $\mu$ is the average of the endpoints, 
    \[
        \mu = \frac{1}{2} \left(\sqrt{N} + 1 + \frac{1}{N}\right)
    \]
    giving the largest symmetric domain containing $0$ as $(-N^{\ast}, N^{\ast})$. With this value of $\mu$, no additional restrictions come from the pre or postcomposition.
\end{proof}

\begin{lemma}
    The function $(x, y) \mapsto xy$ on the domain $(-\frac{1}{2}N^{\ast}, \frac{1}{2} N^{\ast}) \times (-\frac{1}{2} N^{\ast}, \frac{1}{2}N^{\ast})$ can be simulated on computational linkages.
\end{lemma}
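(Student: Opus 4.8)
The plan is to realize multiplication as a composite of the elementary operations already constructed, using the polarization identity $xy = \tfrac12\bigl((x+y)^2 - x^2 - y^2\bigr)$. Since being simulable by computational linkages is closed under composition --- one simply stacks the output register of one construction onto the input register of the next, routing the unused computational linkages rigidly as in the swap and copy constructions --- it suffices to exhibit a sequence of the elementary operations already handled (copying, squaring, half-addition, scalar multiplication, negation, and addition) whose composite is $(x,y)\mapsto xy$, and then to check that the domains line up so that the full composite is defined on $(-\tfrac12 N^{\ast},\tfrac12 N^{\ast})\times(-\tfrac12 N^{\ast},\tfrac12 N^{\ast})$.

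Concretely, I would first use copy operations to duplicate the input values $x$ and $y$, then apply the squaring lemma to obtain $x^2$ and $y^2$ (valid since $(-\tfrac12 N^{\ast},\tfrac12 N^{\ast})\subset(-N^{\ast},N^{\ast})$). Next, half-addition --- defined on $(-N,N)\times(-N,N)$ --- produces $\tfrac12(x+y)$; squaring this value and then multiplying by the scalar $4$ yields $(x+y)^2$. Negating $x^2$ and $y^2$ and adding the two results gives $-(x^2+y^2)$; one further addition of $(x+y)^2$ to $-(x^2+y^2)$ produces $(x+y)^2-x^2-y^2 = 2xy$, and a final scalar multiplication by $\tfrac12$ gives $xy$. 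No centering precompositions are needed here, because $x^2$, half-addition, negation, addition, and scalar multiplication by $\tfrac12$ are all already symmetric about $0$; one only needs $N$ large enough that $4\in(1,N)$.

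\textbf{The main obstacle is the domain bookkeeping}, in particular pinning down why the factor $\tfrac12$ must appear. The binding constraint occurs at the last addition step, where the input $(x+y)^2$ must lie in the addition domain $(-\tfrac14 N,\tfrac14 N)$. Since $(N^{\ast})^2 < \tfrac14 N$ (indeed $2N^{\ast}=\sqrt N - 1 - \tfrac1N < \sqrt N$, so $4(N^{\ast})^2 < N$) while $(2N^{\ast})^2$ is close to $N$ and hence exceeds $\tfrac14 N$ for large $N$, one is forced to require $|x+y| < N^{\ast}$, i.e. $x,y\in(-\tfrac12 N^{\ast},\tfrac12 N^{\ast})$ --- exactly the claimed domain. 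It then remains to verify that every other step survives on this domain: $x^2,y^2 < \tfrac14(N^{\ast})^2 < \tfrac12 N$, so negation applies; $\tfrac14(x+y)^2 < \tfrac14(N^{\ast})^2$ lies in the domain of multiplication by $4$; $-(x^2+y^2)$ has magnitude $<\tfrac12(N^{\ast})^2<\tfrac14 N$; and $|2xy| < \tfrac12(N^{\ast})^2 < \tfrac12 N$ lies in the domain of multiplication by $\tfrac12$. Since $N^{\ast}$ grows like $\sqrt N$, one can always choose $N$ large enough that this domain contains whatever bounded box the subsequent polynomial constructions require.
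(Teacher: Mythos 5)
Your proof is correct and takes essentially the same approach as the paper: reduce multiplication to squaring via a polarization-type identity (the paper uses $\tfrac12\bigl((x+y)^2-(x-y)^2\bigr)$, getting the domain $(-\tfrac12 N^{\ast},\tfrac12 N^{\ast})^2$ from the requirement that the squared arguments lie in $(-N^{\ast},N^{\ast})$), composed with the previously simulated addition, negation, and scalar operations. Your variant identity $xy=\tfrac12\bigl((x+y)^2-x^2-y^2\bigr)$ costs an extra squaring and shifts the binding constraint to a later addition step, but your domain bookkeeping is valid and arrives at the same domain.
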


\begin{proof}
    We use the identity
    \[
        \frac{1}{2} \left((x + y)^2 - (x - y)^2 \right) = xy
    \]
    Thus we can simulate the function $x, y \mapsto xy$ by composing addition, negation, and squaring. The composition of adding $x$ and $y$ and then squaring adds a new restriction on the domain, so we require that $x, y \in (-\frac{1}{2} N^{\ast}, \frac{1}{2} N^{\ast})$. The other operations do not impose any new restrictions, where again we note that in the last step we use half-addition, $x, y \mapsto \frac{1}{2}(x + y)$, which is defined everywhere.
\end{proof}

\subsection{All polynomials can be simulated by computational linkages.}

With the above constructions, we can simulate any polynomial function on an appropriate domain.

\begin{theorem}
    Let $f : \R^n \to \R^m$ be a polynomial function and $U \subset \R^n$ be a bounded set. Then we can simulate $(f, U)$ by computational linkages.
\end{theorem}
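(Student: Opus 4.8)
The plan is to write $f$ as a composition of the elementary operations built in the preceding lemmas, to realize each operation by the polyhedral linkage produced there, and to concatenate these linkages by stacking them in the $z$-direction. Since $U$ is bounded we may enlarge it and assume $U=(-R,R)^n$ for some $R>0$; simulating $(f,U)$ on this box a fortiori simulates it on any smaller set. Writing $f=(f_1,\dots,f_m)$ with $f_j\in\R[x_1,\dots,x_n]$, each $f_j$ can be produced from $x_1,\dots,x_n$ by a finite straight-line program whose instructions are among $x\mapsto x+\lambda$, $x\mapsto\lambda x$, $(x,y)\mapsto x+y$, and $(x,y)\mapsto xy$ (for instance: expand into monomials, build each monomial by repeated multiplication, rescale by its coefficient, and sum; a constant term is the value of a rigid sub-linkage). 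A value needed more than once is first duplicated with the copy operation, and operands are brought into the positions demanded by a given elementary linkage using the swap operation; so after allocating a register with a fixed number of slots — $n$ for the inputs, finitely many for scratch, and $m$ for the outputs — the whole of $f$ is expressed as a finite sequence of operations, each of which was shown to be simulable.

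The only quantitative point is the choice of the parameters $a<b$, equivalently of $N=\tfrac{1}{2}(b-a)$. Running the straight-line program on $(-R,R)^n$, the tuple of values present after the $i$-th instruction lies, as the input ranges over $(-R,R)^n$, in a box of some radius $R_i$ that depends only on $R$ and the finitely many fixed constants of the program, and \emph{not} on $N$. Each elementary operation, on the other hand, was shown to be simulable on a symmetric interval (or square) about $0$ whose half-width is a fixed positive multiple of $N$ for the additive and scalar operations and of $N^{\ast}\sim\sqrt N$ for multiplication; all of these tend to $\infty$ with $N$, while the representable range is $(-N,N)$. Hence we may fix a single $N$ — depending on $f$ and $R$ — so large that for every $i$ the operation performed at step $i+1$ is simulable on a domain containing the box of radius $R_i$, and so large that every partial result remains in $(-N,N)$; we then choose $a,b$ accordingly, keeping the extenders as thin as we please by Theorem~\ref{thm:extender}.

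Finally, assemble the linkage: for each instruction take the embedded polyhedral linkage supplied by the corresponding lemma or by the explicit swap/copy construction, which joins an input register at height $z=z_{i-1}$ to an output register obtained from it by a translation to height $z=z_i$, and glue the output register of step $i$ to the input register of step $i+1$. The $i$-th gadget occupies the slab $z_{i-1}\le z\le z_i$ and is itself embedded, and distinct slabs overlap only along a shared register, so the concatenation is embedded; and by construction, whenever the first register holds an input in $(-R,R)^n$, each successive register holds the values of the corresponding partial computation, so the final register holds $f$ of the input in its $m$ output slots. This gives the required simulation of $(f,U)$. The main obstacle is the bookkeeping of the previous paragraph: one must be certain that the constant-factor loss of domain incurred at each of the finitely many elementary steps is harmless, which is exactly why the growth estimate $N^{\ast}\sim\sqrt N$ was recorded — it guarantees that increasing $N$ makes every working domain arbitrarily large while the intermediate value bounds $R_i$ stay fixed.
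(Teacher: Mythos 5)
Your proposal is correct and follows essentially the same route as the paper: decompose the polynomial into a finite sequence of the elementary operations already shown to be simulable, compose them via vertically stacked registers (using swap/copy for bookkeeping), and choose $N$ large enough that every operation's domain — each a neighborhood of $0$ whose size grows with $N$ — contains all intermediate values arising from the bounded set $U$. Your write-up simply makes explicit the quantitative bookkeeping that the paper leaves implicit.
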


\begin{proof}
    A polynomial function comes from the composition of scalar addition, scalar multiplication, negation, addition and multiplication. Each operation can be simulated by computational linkages, and for each operation, the domain is defined on some neighborhood of $0$ whose size only depends on $N$. Thus we can choose $N$ large enough that each operation is always defined on every input from $U$.
\end{proof}

\section{Vector computation}
\label{section:vector-computation}

For $3$ dimensional motion, we consider the $1$-skeleton of a cube made of twelve extender linkages joined by eight rigid cubes. (See Figure~\ref{fig:cube}, left.) If one cube is fixed, the opposite cube has a $3$-dimensional range of motion. Moreover, note that if this cube records the position $(x, y, z)$, then the three cubes adjacent to the fixed cube record the coordinates $(x, 0, 0), (0, y, 0)$ and $(0, 0, z)$.

\smallskip

\begin{figure}
\begin{center}
    \includegraphics[scale = 0.3]{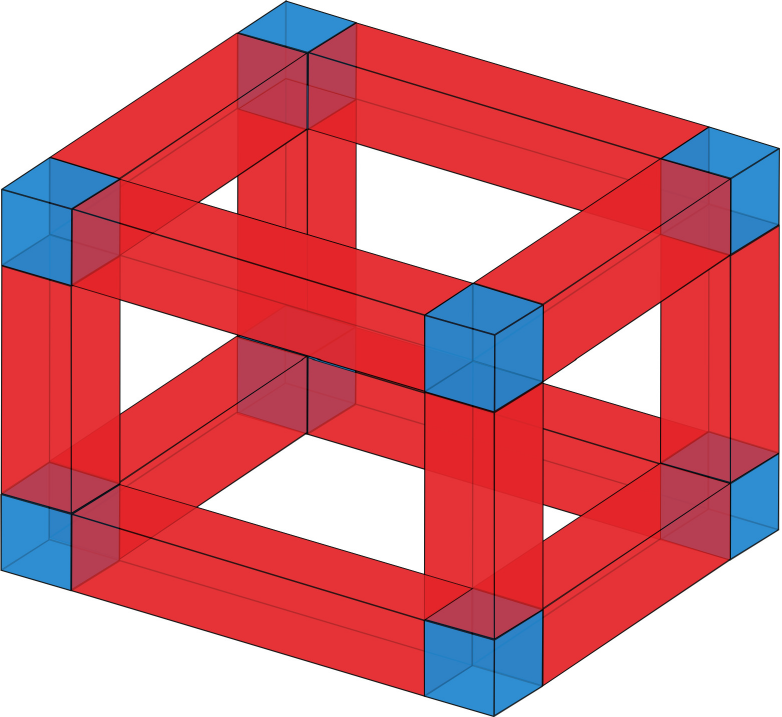} \quad
    \includegraphics[scale = 0.3]{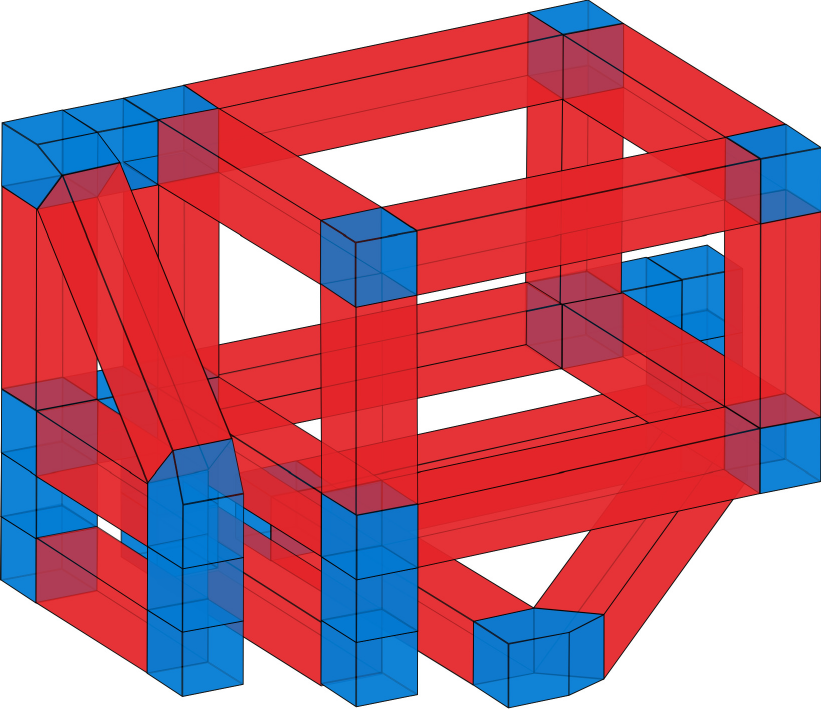} \quad
    \includegraphics[scale = 0.3]{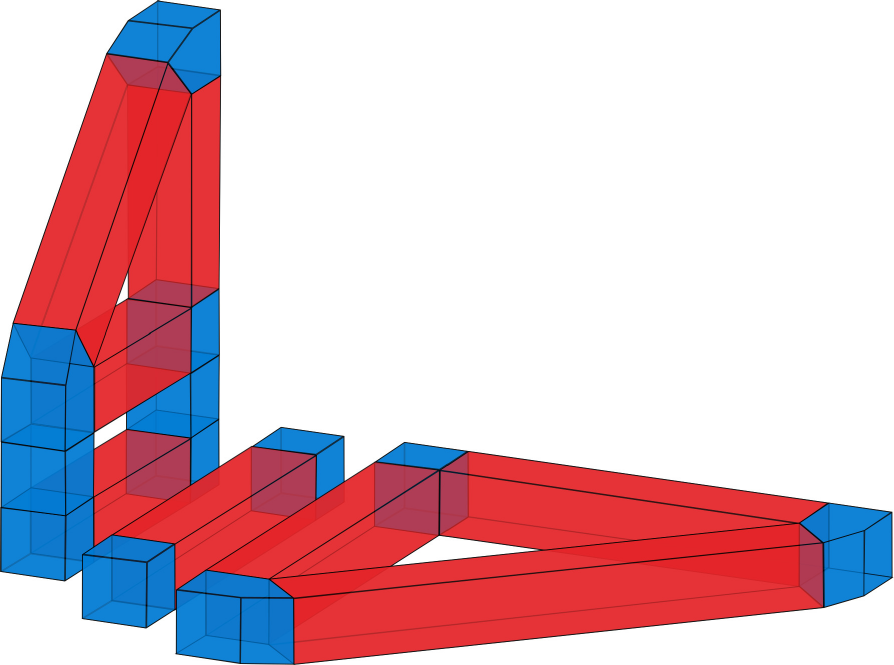}
    \caption{Left: an embedded polyhedral linkage with a $3$-dimensional flexion. Middle/Right: a mechanism for translating $3$ dimensional motion into computational linkages.}
    \label{fig:cube}
\end{center}
\end{figure}

By adding two $\left(\frac{\pi}{4}, \frac{\pi}{4}, \frac{\pi}{2}\right)$ triangle linkage, we can transfer $3$ dimensional motion to an array of three computational linkages. By offsetting the linkages, this construction can be embedded. (See Figure \ref{fig:cube} middle, right.)

\smallskip

Conversely, we can also take three computational linkages $C_1, C_2, C_3$ and simulate three dimensional motion with a vertex aligned at $(|C_1|, |C_2|, |C_3|)$. This proves a generalization to the result given in $1$ dimension on computational linkages.

\begin{customthm}{1.1}
    Let $F : \R^{3m_1} \to \R^{3m_2}$ be a polynomial function, and $U \subset \R^{3m_1}$ be a bounded open set. There exists an embedded polyhedral linkage which realizes $F$ on $U$.
\end{customthm}

As a corollary, we can also restrict a polyhedral linkage to trace out a semialgebraic set.

\begin{customcor}{1.2}
    Let $V \subset \R^3$ be an algebraic set, and $U \subset \R^3$ be a bounded subset. There there is a fixed embedded polyhedral linkage which realizes $V \cap U$.
\end{customcor}

\begin{proof}
    An algebraic set $V$ is defined as the vanishing locus of a set of polynomials $f_1, \dotsc, f_n : \R^3 \to \R$. Let $F$ be the function $\R^3 \to \R^{3n}$ defined by
    \[
        F : (x, y, z) \mapsto \big((f_1(x,y,z), 0, 0), \dotsc, (f_n(x,y,z), 0, 0) \big)
    \]
    In this linkage we fix all of the output vertices to be at their relative origins $(0,0,0)$, thus for each $f_i$, we have the constraint that $f_i(x, y,z) = 0$. Thus the input vertex is constrained to be in the intersection $V \cap U$, completing the proof.
\end{proof}

\section{Final remarks}

\subsection{Higher dimensions}
\label{sub-section:higher-dim}
Note that our proofs for Theorem \ref{thm:linkage-universal} and Corollary \ref{thm:linkage-universal-closed} naturally generalize to all dimensions $n > 3$. By extruding a polyhedral linkage into higher dimensions, no added flexibility is gained. Thus by extruding our construction of extender linkages, we can generalize the results of Theorem~\ref{thm:extender} into higher dimensions. Each step for performing scalar computation will carry over as well. And finally, we can modify our construction of translating $3$-dimensional motion into computational linkages by employing the $1$-skeleton of an $n$-cube made of extender linkages to translate $n$-dimensional motion into a register of $n$ extender linkages, as in Figure~\ref{fig:cube}. Thus our results hold for embedded polyhedral linkages in dimension $n \geq 3$.

\subsection{Embedding planar linkages}
\label{sub-section:inversion}
The Peaucellier inversor is the main barrier to embedding planar linkages in Kempe's original proof. Although the Peaucellier inversor itself is planar, because the output vertex $A$ is not adjacent to the external face, after any composition the resulting planar linkage will not correspond to a planar graph. (See Figure \ref{fig:peaucellier-inversion}.) Thus any planar linkage which represents a polynomial of degree $\geq 2$ cannot be embedded in $\R^2$ via Kempe's construction. An embedded construction was recently achieved in \cite{AB+}.

\smallskip

However, note that deciding if a planar linkage can be embedded is different than deciding if the underlying graph is planar. F\'{a}ry's Theorem \cite{Fary} states that every planar graph can be drawn in the plane using straight lines, so every planar graph can be realized by a linkage for some choice of edge weights. However, there exist planar linkages embedded in $\R^3$ which are topologically equivalent to the unknot but which cannot be moved to a convex realization. Such linkages are called \textit{locked}. (See e.g. \cite{CJ} and \cite{BD+}.) This phenomena is special to $3$-dimensions; there are no locked planar linkages in $2$-dimensions or $4$-dimensions. (See \cite{CDR} and \cite{CO'R}.)

\smallskip

With suitable hypotheses, Wilson \cite{Wil} proved a generalization of F\'{a}ry's Theorem for topological simplicial complexes in $\R^n$. Because polyhedral linkages have less flexibility than planar linkages, the existence of locked planar linkages in $\R^3$ implies that the same is true for $2$-dimensional polyhedral linkages.

\subsection{Embedded realizations}
\label{sub-section:embedded}
In $\S$ \ref{sub-section:polyhedral-linkages}, we chose the convention to only consider the subset of embedded realizations so that the resulting linkages could be realized as physical mechanisms. An alternate approach is to constrain the linkages so that only embedded realizations are possible. (See e.g. \cite{AB+} for similar considerations for planar linkages in $\R^2$.)

\smallskip

\begin{figure}
\begin{center}
    \includegraphics[scale = 1]{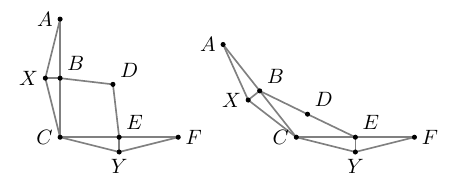}
    \caption{A skew pantograph that achieves a bounded flexible angle.}
    \label{fig:dihedral}
\end{center}
\end{figure}

Consider a rigidified pantograph, with $|BD| = |DE| < |BC| = |CE|$. (See Figure \ref{fig:dihedral}.) If $t = \frac{|BD|}{|BC|}$, then the angle $\angle ACF$ can open to a maximum measure of $2 \sin^{-1}(t) < \pi$. Thus another way to prevent degenerate realizations of the square planar linkage is to replace each corner of the square with a copy of the skew pantograph. Then no degenerate realization is possible because every angle of the original square is strictly less than $\pi$. Extruding to a $3$-dimensional polyhedral linkage gives a linkage whose entire configuration space is embedded. So one could provide an alternate proof of Theorem \ref{thm:extender} based on this construction and produce an extender linkage whose entire realization space is embedded.

\subsection*{Acknowledgements}
I am grateful to Igor Pak for his advice on how to structure this paper, and to Alexey Glazyrin for feedback on early drafts. I am also grateful to Joseph O'Rourke, Simon Guest and Zeyuan He for feedback and direction.

\end{document}